\documentclass[11pt]{amsart}

\theoremstyle{plain}
\newtheorem{thm}{Theorem}[section]
\newtheorem{theorem}[thm]{Theorem}

\newtheorem{lemma}[thm]{Lemma}

\newtheorem{proposition}[thm]{Proposition}
\theoremstyle{definition}
\newtheorem{remark}[thm]{Remark}

\newtheorem{definition}[thm]{Definition}

\newtheorem{conjecture}[thm]{Conjecture}

\numberwithin{equation}{section}

\newcommand{\sK}{{\mathcal K}}

\newcommand{\sO}{{\mathcal O}}

\newcommand{\sR}{{\mathcal R}}
\newcommand{\sS}{{\mathcal S}}

\newcommand{\sU}{{\mathcal U}}

\newcommand{\sX}{{\mathcal X}}


\newcommand{\BP}{{\mathbb P}}


\newcommand{\fg}{{\mathfrak g}}

\newcommand{\fh}{{\mathfrak h}}

\def\Hom{\mathop{\rm Hom}\nolimits}

\title[the formal principle]{An application of Cartan's equivalence method to   Hirschowitz's conjecture on the formal principle}
\author[Jun-Muk Hwang]{Jun-Muk Hwang} 
\address{Korea Institute for Advanced Study, Hoegiro 85, Seoul 02455, Korea}
\email{jmhwang@kias.re.kr}
\thanks{The author is supported
by National Researcher Program 2010-0020413 of NRF}

\begin{document}

\begin{abstract} A conjecture of Hirschowitz's  predicts that  a globally generated vector bundle $W$ on a compact complex manifold $A$ satisfies the formal principle, i.e., the formal neighborhood of its zero section determines the germ of  neighborhoods in the underlying complex manifold  of the vector bundle $W$. By applying Cartan's equivalence method to a suitable differential system on the universal family of the Douady space of the complex manifold, we prove that this conjecture is true if $A$ is a Fano manifold, or if the global sections of $W$  separate points of $A$. Our method  shows more generally that for any unobstructed compact submanifold $A$ in a complex manifold, if the normal bundle is globally generated and its sections separate points of $A$, then a sufficiently general deformation of $A$ satisfies the formal principle. In particular, a sufficiently general smooth free rational curve on a complex manifold satisfies the formal principle.
\end{abstract}

\maketitle

\noindent {\sc Keywords.} formal principle,  Cartan-K\"ahler theorem, equivalence method

\noindent {\sc MSC2010 Classification.} 32K07, 58A15,  32C22

\section{Introduction}
A fundamental problem in complex geometry is to understand the germ, or the neighborhood structure,  of a (compact) complex submanifold in a complex manifold (see \cite{Gi66} for an introduction to various questions on this topic and
\cite{ABT}  for more recent developments).
A natural approach is to study first the isomorphism types of the finite-order neighborhoods of the submanifold,
 which is usually a cohomological question of geometric or algebraic nature. Once all the finite-order neighborhoods are understood, i.e., the isomorphism type of the formal neighborhood of the submanifold is determined, then one faces the question of the convergence of the formal isomorphism or the existence of  biholomorphic isomorphisms approximating the formal isomorphism, which is usually a question of analytic nature.  Our main interest, the formal principle, is a version of the latter question.
 We use the following definition.

 \begin{definition}\label{d.GP} For a compact complex submanifold $A$ in a complex manifold $X$,
 \begin{itemize}\item[(i)] $(A/X)_{\ell}$ for  a nonnegative integer $\ell$ denotes
its $\ell$-th order neighborhood (i.e. the analytic space defined
  by the $(\ell+1)$-th power of the ideal of $A \subset X$); \item[(ii)] $(A/X)_{\infty}$ denotes the formal neighborhood of $A$ in $X$; and \item[(iii)] $(A/X)_{\sO}$ denotes the germ of (Euclidean) neighborhoods of $A$ in $X$. \end{itemize}
  We say that $A \subset X$
 {\em satisfies the formal principle}, or equivalently, {\em the formal principle holds} for $A \subset X$,
    if given \begin{itemize}
        \item[(1)] a  compact submanifold $\tilde{A}$ in a complex manifold $\tilde{X}$;
    \item[(2)]
      a formal isomorphism $\psi: (A/X)_{\infty} \to (\tilde{A}/\tilde{X})_{\infty}$ between the formal neighborhoods; and \item[(3)] a positive integer $\ell$,  \end{itemize} we can find a biholomorphism
$\Psi: (A/X)_{\sO} \to (\tilde{A}/\tilde{X})_{\sO}$  such that $\Psi|_{(A/X)_{\ell}} = \psi|_{(A/X)_{\ell}}$.
\end{definition}

  As many authors have been interested in  comparing the germ $A\subset X$ with the germ of  the zero section of the normal bundle $N_{A/X}$ (see \cite{ABT} and references therein),  it is convenient to introduce the following terminology.

\begin{definition}\label{d.bundle}
Let $W$ be a vector bundle on a compact complex manifold $A$. We say that $W$ {\em satisfies the
formal principle}, if the zero section $0_A \subset W$, regarded as a submanifold of the complex manifold $X$ underlying the vector bundle $W$, satisfies the formal principle in the sense of Definition \ref{d.GP}.
    \end{definition}

 Not every compact complex submanifold in a complex  manifold satisfies the formal principle. In fact, Arnold (\cite{Ar}) discovered  a line bundle of degree 0 on an elliptic curve that does not satisfy the formal principle.   On the other hand, the formal principle holds in many cases of $A \subset X$, if the normal bundle $N_{A/X}$ satisfies certain positivity or negativity conditions (see the surveys in \cite{Ks86} and Section VII.4 of \cite{GP}). In \cite{Hi}, Hirschowitz explored the possibility of replacing complex-analytic or differential-geometric positivity conditions on the normal bundle $N_{A/X}$ by  more geometric conditions in terms of the deformations of the submanifold $A$ in $X$ and proposed a very interesting conjecture.  To avoid technicalities, let us assume that the submanifold $A \subset X$ is unobstructed, i.e., all infinitesimal deformations $H^0(A, N_{A/X})$ can be realized as actual deformations of $A$ in $X$ (this is equivalent to the smoothness of the Douady space of $X$ at the point corresponding to $A$). Then we can state Hirschowitz's conjecture in the introduction of \cite{Hi} in the following way.

\begin{conjecture}\label{c.Hirs}
Let $A  \subset X$ be an unobstructed compact submanifold of a complex manifold. Assume that the normal bundle  $N_{A/X}$ is globally generated, i.e., the sequence
$$ 0 \to H^0(A, N_{A/X} \otimes {\bf m}_x) \to H^0(A, N_{A/X}) \to N_{A/X,x} \to 0,$$
 where ${\bf m}_x$ is the maximal ideal at $x \in A$, is exact at every $x \in A$.  Then $A \subset X$ satisfies the formal principle. \end{conjecture}

Note that the zero section $0_A$ in Definition \ref{d.bundle} is always unobstructed. So  Conjecture \ref{c.Hirs} predicts the following.

\begin{conjecture}\label{c.bundle}
A globally generated vector bundle  on a compact complex manifold satisfies the formal principle. \end{conjecture}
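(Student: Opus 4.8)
The plan is to reconstruct the germ $(0_A/X)_{\sO}$ from the family of deformations of the zero section and to show that a formal isomorphism transports this family, after which Cartan's equivalence method upgrades formal agreement to a biholomorphism. Here $X$ denotes the total space of $W$ and $A = 0_A$ its zero section, so that $N_{A/X} \cong W$.

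First I would build the sweeping family. Because $W$ is globally generated, each section $s \in H^0(A,W)$ deforms $0_A$ to its graph, and the exact sequence in Conjecture \ref{c.Hirs} realizes all normal directions in this way; since $0_A$ is unobstructed, the Douady space $\sD$ of $X$ is smooth at $[0_A]$ with tangent space $H^0(A,W)$. Writing $\sU \to \sD$ for the universal family near $[0_A]$ and $\mu:\sU \to X$ for the evaluation map, global generation makes $\mu$ a submersion along the central fiber, so the deformed zero sections sweep out a full neighborhood of $A$ and the germ is captured by the double fibration $\sD \xleftarrow{\rho} \sU \xrightarrow{\mu} X$.

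Next I would encode the germ as a differential structure on $\sU$. The two transverse foliations on $\sU$—the $\rho$-fibers (the deformed submanifolds) and the $\mu$-fibers (the deformations through a fixed point of $X$)—have a mutual configuration that defines a $G$-structure, equivalently an exterior differential system, whose local invariants are intrinsic to the germ. The key observation is that this structure is determined by finite-order data: the tangent behavior of the two foliations along the central fiber depends only on finitely many jets, hence can be read off from some $(0_A/X)_{\ell}$. A formal isomorphism $\psi$ would therefore induce a formal isomorphism of these structures on $\sU$ and $\tilde{\sU}$, and Cartan's equivalence method, applied to this system, should show that two such structures agreeing formally are analytically equivalent, the analytic solution being furnished by the Cartan-K\"ahler theorem once involutivity is verified. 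Pushing the resulting equivalence down along $\mu$ would yield the biholomorphism $\Psi:(0_A/X)_{\sO} \to (\tilde{A}/\tilde{X})_{\sO}$, arranged to match $\psi$ to the prescribed order $\ell$.

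The hard part will be the descent along $\mu$: to recover $X$ as the leaf space of the $\mu$-foliation one needs $\mu$ to separate the deformations finely enough, which amounts to requiring that the global sections of $W$ separate points of $A$. Without point-separation, distinct points of a deformed submanifold can be indistinguishable in the structure and the passage from $\sU$ back to $X$ breaks down—so global generation supplies enough deformations to fill a neighborhood, but point-separation (or a substitute such as the Fano condition, under which the deformations carry abundant rational curves) is what makes the neighborhood recoverable. Verifying involutivity of the exterior differential system and carrying out the equivalence-method normalizations will be the technical core of the argument, and is presumably why the general conjecture must be approached through these extra hypotheses.
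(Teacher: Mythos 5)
The first thing to note is that the statement you are asked to prove is Conjecture \ref{c.bundle}, which the paper itself does not prove and explicitly leaves open: it is established only under an additional separation hypothesis (Theorem \ref{t.bundle}, requiring $H^0(A, W \otimes {\bf m}_x) \neq H^0(A, W \otimes {\bf m}_y)$ for $x \neq y$) or for Fano base (Theorem \ref{t.Fano}, obtained by twisting with $K_X^{-p}$ to manufacture that separation). Your closing paragraph concedes exactly this, so what you have written is not a proof of the statement; it is an outline of the paper's strategy for the partial results, and in that respect it does follow the paper's route: the double fibration $\sK \leftarrow \sU \rightarrow X$ coming from the Douady space, the $G$-structure on $\sU$ cut out by the two foliations $\Ker({\rm d}\rho)$ and $\Ker({\rm d}\mu)$, the transport of the formal isomorphism to this structure via Hirschowitz's results, and the formal-to-analytic upgrade via Cartan--K\"ahler in Morimoto's formulation.

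Even granting the extra hypotheses, two essential steps are missing from your sketch. First, Morimoto's theorem (Theorem \ref{t.Morimoto}) produces an involutive Cartan bundle, and hence the equivalence, only at points of $\sU$ outside a nowhere-dense subset $S$; involutivity is not something one gets to ``verify'' everywhere, and the conclusion is therefore only that a \emph{general} member of the family of graphs $s(A)$, $s \in H^0(A,W)$, satisfies the formal principle. To return to the zero section you need the observation that the translation $\tau_s : w \mapsto w + s(a)$ is a global automorphism of the total space carrying $0_A$ to $s(A)$, so that the formal principle for any one graph implies it for $0_A$; without this your argument yields Theorem \ref{t.main} but not Theorem \ref{t.bundle}. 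Second, the Cartan--K\"ahler solution matches the formal isomorphism only to a prescribed finite order at a single point $u \in \rho^{-1}(a)$, and after descending along $\mu$ one must convert agreement at the one point $\mu(u)$ into agreement on all of $(A/X)_\ell$; this requires Hirschowitz's finite-determination statement (Lemma \ref{l.Hi}), which you do not mention. Finally, your intuition that point-separation is what makes the germ recoverable from the structure on $\sU$ points in the right direction, but its precise role is to make $\rho' : \sU \to \Gr(p, T\sK)$ injective, which is what permits the single-valued analytic continuation of the local equivalence along the entire fiber $\rho^{-1}(\rho(z))$ in Proposition \ref{p.CF}; this is a statement about separating points of $A$ by sections, not about recovering $X$ as a leaf space of the $\mu$-foliation.
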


The assumption of global generation in Conjectures \ref{c.Hirs} and \ref{c.bundle} is a geometric version of the semi-positivity of the normal bundle. In the two extreme cases of the semi-positivity, i.e., either when the normal bundle is trivial or when the normal bundle is positive, the conjecture was solved. Indeed, Theorem 2.2 of \cite{Ks86}, attributed to Hirschowitz, proves Conjecture \ref{c.Hirs} when $N_{A/X}$ is trivial. When the  normal bundle $N_{A/X}$ is ample in Conjecture \ref{c.Hirs}, Commichau and Grauert's result in \cite{CG} settles it (see the remark at the end of Section 4 in \cite{Ks86}). Also,
Hirschowitz \cite{Hi} had obtained some results on Conjecture \ref{c.Hirs}
under the additional assumption that $A$ has sufficiently many deformations in $X$ that have nonempty
intersections with $A$, which is a version of the positivity of the normal bundle.  These results cover  other works on the formal principle for submanifolds with positive normal bundle,  such as Theorem II of \cite{Gi66}. Their works were generalized to some singular varieties in \cite{Ks88} and \cite{St}. Since then, however,  there has been
 little progress on this problem.

   A difficulty in attacking the semi-positive situation of Conjecture \ref{c.Hirs} by the methods used in the works cited above  is due to a fundamental difference of the approaches in the trivial normal bundle case and the positive normal bundle case.   Among others, the methods used for the positive normal bundle case (both  \cite{CG} and \cite{Hi})  proved the {\em convergence} of the given formal isomorphism $\psi$ in Definition \ref{d.GP}, while  the convergence cannot be expected in the trivial normal bundle case.
It is hard to see how to combine these two  different approaches.

 \medskip
 In this paper, we employ \'E. Cartan's equivalence method for geometric structures, to obtain some new results on Conjecture \ref{c.Hirs} and  Conjecture \ref{c.bundle}.  Our main result is formulated in terms of Douady spaces. Recall that  for each complex space $X$, we have its Douady space denoted by ${\rm Douady}(X)$, a complex space parametrizing compact complex subspaces of $X$, with the associated
 universal family morphisms
 $$ {\rm Douady}(X) \stackrel{\rho}{\leftarrow}  {\rm Univ}(X) \stackrel{\mu}{\rightarrow} X.$$
The assignment of ${\rm Douady}(X)$ to a complex space $X$ is a functor, which is a complex-analytic version of the Hilbert scheme functor  in algebraic geometry. We refer the reader to \cite{Dou} or the introductory survey in Section VIII.1 of \cite{GP} for a detailed presentation of Douady spaces.

When interpreted in terms of the Douady space of $X$, the assumptions of Conjecture \ref{c.Hirs} say that the Douady space ${\rm Douady}(X)$ is smooth at $[A] \in {\rm Douady}(X)$ and the  morphism $\mu$ is submersive along the fiber $\rho^{-1}([A])$, i.e., the differential ${\rm d}_y \mu : T_y {\rm Univ}(X) \to T_{\mu(y)} X$ is surjective at every $y \in \rho^{-1}([A])$.
   Our main result is the following weak version of Conjecture \ref{c.Hirs}.

 \begin{theorem}\label{t.main}
 Let $X$ be a complex manifold and let $\sK \subset {\rm Douady}(X)$ be a subset  of the Douady space of $X$ with the associated universal family morphisms
 $$ \sK \stackrel{\rho}{\leftarrow} \sU := \rho^{-1}(\sK) \subset {\rm Univ}(X) \stackrel{\mu}{\rightarrow} X$$ such that
 \begin{itemize}
\item[(i)] $\sK$ is a connected open subset in the smooth loci of ${\rm Douady}(X)$; \item[(ii)] $\rho|_{\sU}$ is a smooth morphism with connected fibers;
 \item[(iii)] $\mu$ is submersive at every point of $\sU$; and
 \item[(iv)] for the submanifold  $A \subset X$  corresponding to any point in $\sK$,
 the normal bundle $N_{A/X}$ satisfies for any $ x \neq y \in A$,
$$H^0(A, N_{A/X} \otimes {\bf m}_x) \neq H^0(A, N_{A/X} \otimes {\bf m}_y) $$ as subspaces of $H^0(A, N_{A/X})$. \end{itemize} Then there exists a nowhere-dense subset $\sS \subset \sK$ such that the submanifold  $A \subset X$ corresponding to any  point of $\sK \setminus \sS$ satisfies the  formal principle. \end{theorem}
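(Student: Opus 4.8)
The plan is to translate the formal principle into an equivalence problem for a geometric structure canonically attached to the family of submanifolds, and then to solve that equivalence problem holomorphically by means of the Cartan--K\"ahler theorem, feeding in the formal isomorphism $\psi$ as formal Cauchy data.

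First I would use the double fibration $\sK \stackrel{\rho}{\leftarrow} \sU \stackrel{\mu}{\rightarrow} X$ to attach to a neighborhood of $A$ in $X$ a canonical exterior differential system, naturally living on the universal family $\sU$ and transported to $X$ through $\mu$. By (ii) and (iii), the relative tangent spaces of $\rho$ push forward under the submersion $\mu$ to a family of $a$-planes (with $a = \dim A$) through each point of $X$ near $A$; assembling these over $X$ gives a subvariety $\sC_x \subset \Gr(a, T_x X)$ at each point, that is, a cone structure $\sE$ on a neighborhood of $A$. Hypothesis (iv) is precisely what makes this structure non-degenerate: two distinct points $x \neq y$ of a member are separated by the infinitesimal members through them, so $\sE$ distinguishes the points of $X$ and is rigid enough for the equivalence method. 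I would repeat the construction on the $\tilde X$ side to obtain $\tilde{\sE}$.

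Next I would run Cartan's equivalence method on $\sE$. Passing to an adapted frame bundle and normalizing (absorbing torsion, isolating the essential invariants and prolonging), the problem of finding a biholomorphism $(A/X)_{\sO} \to (\tilde A/\tilde X)_{\sO}$ that respects the families is reduced to finding an integral manifold of an analytic exterior differential system on the product of the two frame bundles. Over a dense open subset of $\sK$ the Cartan invariants have locally constant rank and the prolonged system is involutive; I would let $\sS \subset \sK$ be the nowhere-dense analytic locus where this regularity fails, so that for $[A] \in \sK \setminus \sS$ the system is involutive.

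The formal isomorphism $\psi$ then supplies, after transporting $\tilde{\sE}$, a formal integral manifold of this system agreeing with $\sE$ to all orders. Truncating at order $\ell$ yields admissible initial data, and the Cartan--K\"ahler theorem produces a genuine holomorphic integral manifold through it; unwinding the construction, this integral manifold is the graph of a biholomorphism $\Psi: (A/X)_{\sO} \to (\tilde A/\tilde X)_{\sO}$ with $\Psi|_{(A/X)_{\ell}} = \psi|_{(A/X)_{\ell}}$, which is the formal principle for $A \subset X$. The main obstacle, I expect, is twofold. Checking that $\psi$ genuinely induces a formal isomorphism of the structures $\sE$ and $\tilde{\sE}$ requires that the family of deformations be recoverable from finite-order neighborhood data, which is where the unobstructedness of $A$ and the cohomology of the globally generated normal bundle enter, guaranteeing that the Douady-space germ, and hence $\sE$, is formally determined. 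The deeper difficulty is to establish involutivity of the prolonged system, together with the constant-rank normalizations, on a dense open subset of $\sK$ --- that is, to show that the cone structure coming from (iii) and (iv) is generically regular enough for Cartan--K\"ahler --- and to identify $\sS$ precisely as the degeneracy locus of the Cartan invariants.
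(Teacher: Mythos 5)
Your overall strategy---encode the family of submanifolds as a geometric structure, run Cartan's equivalence method, take $\sS$ to be the degeneracy locus of the Cartan invariants, and produce the biholomorphism from the formal data via Cartan--K\"ahler---is indeed the paper's strategy. The differences in set-up are minor: the paper places the structure on the universal family $\sU$ (the reduction of the frame bundle of $\sU$ to the group preserving the two foliations $\Ker({\rm d}\rho)$ and $\Ker({\rm d}\mu)$) rather than pushing it down to a cone structure on $X$, and the two difficulties you flag at the end are resolved by citation: generic involutivity and the bad locus come from Morimoto's general theorem on Cartan bundles, and the fact that $\psi$ induces a formal isomorphism of Douady spaces and universal families is Hirschowitz's Propositions 5.7--5.8. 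However, two steps that your proposal passes over silently are genuine gaps, not bookkeeping.

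First, Cartan--K\"ahler is a \emph{local} existence theorem: it yields an integral manifold, i.e.\ the germ of a biholomorphism, near a single point of $A$, not a biholomorphism $(A/X)_{\sO}\to(\tilde A/\tilde X)_{\sO}$ of a neighborhood of the compact submanifold. The paper bridges this by analytically continuing the local solution along the fiber $\rho^{-1}([A])$, and this is exactly where hypothesis (iv) enters: it says that $u\mapsto T_{\rho(u)}\,\rho(\mu^{-1}(\mu(u)))\in\Gr(p,T\sK)$ is injective, which makes the continuation single-valued (Proposition 3.3 of the paper). You instead assign (iv) the role of making the cone structure ``rigid enough for the equivalence method,'' which is not its function; as written, the hypothesis is never actually used and the global map is never constructed. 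Second, even granting a global $\Psi$, the Cartan--K\"ahler approximation matches $\psi$ to high order only at the chosen base point $x\in A$, whereas the formal principle demands $\Psi|_{(A/X)_{\ell}}=\psi|_{(A/X)_{\ell}}$ on the $\ell$-th neighborhood of \emph{all} of $A$. The paper closes this with Hirschowitz's rigidity lemma: for a suitable $\ell^{+}$, a formal automorphism of $(A/X)_{\infty}$ fixing $(x/X)_{\ell^{+}}$ at one point is the identity on $(A/X)_{\ell}$, applied to $\psi^{-1}\circ\Psi$. Your proposal asserts the conclusion without this step, and it does not follow from the pointwise approximation alone. (A smaller omission of the same kind: your degeneracy locus lives on $X$ or $\sU$, and an argument---in the paper, saturating the bad set under $\rho$ before projecting---is needed to turn it into a nowhere-dense subset of $\sK$.)
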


Note that all the conditions (i)--(iv) of Theorem \ref{t.main} are open conditions on ${\rm Douady}(X)$.
In the setting of Conjecture \ref{c.Hirs}, an open neighborhood $\sK$ of the point of ${\rm Douady}(X)$ corresponding to $A$ satisfies (i), (ii) and (iii). Thus Theorem \ref{t.main} says that if  the sections of the normal bundle separate points of $A$ in the setting of Conjecture \ref{c.Hirs}, then {\em the formal principle holds for sufficiently general deformations} of $A$ in $X$.

The key idea of the proof of Theorem \ref{t.main} is as follows.
We introduce    a natural system of differential equations at a general point of $\sU$. If we can  solve this system of differential equations, a special local holomorphic solution $\Psi$ in Definition \ref{d.GP} can be obtained  in a neighborhood of a point of $A$ and then we can analytically continue  it along $A$ to obtain a global solution. The additional condition (iv) in Theorem \ref{t.main} is imposed to prevent the multi-valuedness of the analytic continuation. Thus the problem is reduced to solving a locally defined  system of differential equations.
 Our system of differential equations describes an equivalence problem of certain geometric structures and it is solved by applying  a result of Morimoto's  in \cite{Mo83}.  Morimoto's result, which is a  rigorous version of Cartan's equivalence method (\cite{Ca}),  says  that the formal equivalence of geometric structures implies their biholomorphic equivalence,  at any point outside a nowhere-dense subset. The existence of the biholomorphic equivalence follows  eventually from Cartan-K\"ahler theorem with estimates (e.g. the version given in Appendix of \cite{Ma72} or Theorem IX.2.2 of \cite{BCG}). Thus our proof of Theorem \ref{t.main} is essentially a series of geometric arguments, reducing it to   its main analytical ingredient, Cartan-K\"ahler theorem.

Theorem \ref{t.main} has several applications.
 In Conjecture \ref{c.bundle}, the germ of any holomorphic section of the bundle $W \to A$ is biholomorphic to the germ of the zero section $0_A$ in $W$. Thus Theorem \ref{t.main} implies the following weaker version of Conjecture \ref{c.bundle}.

 \begin{theorem}\label{t.bundle}
 Let $W$ be a globally generated vector bundle on a compact complex manifold $A$ such that $$H^0(A, W \otimes {\bf m}_x) \neq H^0(A, W \otimes {\bf m}_y) \mbox{ for any } x \neq y \in A.$$
 Then $W$ satisfies the formal principle. \end{theorem}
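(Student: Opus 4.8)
The plan is to deduce Theorem \ref{t.bundle} directly from Theorem \ref{t.main} by applying the latter to the total space of $W$ and then exploiting the fiberwise translation symmetries of a vector bundle. First I would let $X$ be the complex manifold underlying $W$ and consider the zero section $0_A \subset X$, whose normal bundle is canonically $N_{0_A/X} \cong W$. As noted after Definition \ref{d.bundle}, $0_A$ is unobstructed, and the standard identification $T_{[0_A]} {\rm Douady}(X) \cong H^0(A, N_{0_A/X}) = H^0(A, W)$ shows that the holomorphic map $H^0(A, W) \to {\rm Douady}(X)$ sending a section $s$ to the point $[s(A)]$ of the image of $s$ is a local biholomorphism at the origin. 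Hence its image contains a connected open neighborhood of $[0_A]$ inside the smooth locus of ${\rm Douady}(X)$, each point of which corresponds to the image $s(A)$ of a section $s$.

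Next I would verify that conditions (i)--(iv) of Theorem \ref{t.main} hold on such a neighborhood $\sK$. Condition (i) is the smoothness just observed; (ii) holds because every $s(A)$ is a smooth connected submanifold biholomorphic to $A$; (iii) amounts, point by point, to the surjectivity of the evaluation map $H^0(A, W) \to W_x$, since the differential of $\mu$ at a point of $\sU$ splits into the tangent directions along $s(A)$ and the normal directions coming from varying $s$, the latter being exactly this evaluation — so (iii) is precisely the global generation of $W$; and (iv) is the separation hypothesis $H^0(A, W \otimes {\bf m}_x) \neq H^0(A, W \otimes {\bf m}_y)$, transported to each nearby section through the isomorphism of its normal bundle with $W$. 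Theorem \ref{t.main} then yields a nowhere-dense subset $\sS \subset \sK$ such that $s(A) \subset X$ satisfies the formal principle whenever $[s(A)] \in \sK \setminus \sS$.

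The decisive point is that all the submanifolds $s(A)$ have biholomorphic germs of neighborhoods in $X$. Indeed, for each section $s$ the fiberwise translation $\tau_s : v \mapsto v + s(\pi(v))$, where $\pi : X \to A$ is the bundle projection, is a global biholomorphism of $X$ carrying $0_A$ onto $s(A)$ and inducing compatible isomorphisms of all finite-order neighborhoods and of the formal neighborhood. Conjugation by $\tau_s$ therefore shows the formal principle to be invariant within this family: given a target $(\tilde A, \tilde X)$, a formal isomorphism $\psi : (0_A/X)_\infty \to (\tilde A/\tilde X)_\infty$, and an integer $\ell$, the composite $\psi \circ \tau_s^{-1}$ is a formal isomorphism for $s(A)$, and any biholomorphism $\Psi'$ realizing the formal principle for $s(A)$ at order $\ell$ produces $\Psi := \Psi' \circ \tau_s$ realizing it for $0_A$ with the same $\ell$-th order agreement. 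Choosing any $s$ with $[s(A)] \in \sK \setminus \sS$, which is possible because $\sS$ is nowhere dense, I conclude that $0_A \subset X$, and hence $W$, satisfies the formal principle.

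I expect no genuine obstacle here, since the analytic heart of the argument is already contained in Theorem \ref{t.main}; the only care needed is in the final paragraph, namely checking that $\tau_s$ intertwines the Euclidean, finite-order, and formal neighborhoods compatibly with Definition \ref{d.GP}. This is immediate once one observes that $\tau_s$ is an honest automorphism of $X$ commuting with $\pi$, so that the transfer of the formal principle reduces to the formal verification above.
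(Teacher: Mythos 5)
Your proposal is correct and follows essentially the same route as the paper's own proof: apply Theorem \ref{t.main} to the family of section images $\{s(A)\}$ inside the total space $X$ of $W$, then use the fiberwise translation $\tau_s$ to transport the formal principle from a general $s(A)$ back to the zero section. Your final paragraph merely spells out in more detail the transfer step that the paper states in one line.
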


Recall that a compact complex manifold $A$ is Fano if the anti-canonical line bundle $K^{-1}_A = \det TA$ is ample (or positive). We have the following refined version of Theorem \ref{t.bundle}, which proves Conjecture \ref{c.bundle} for Fano manifolds.

\begin{theorem}\label{t.Fano}
A globally generated vector bundle on a  Fano manifold satisfies the formal principle. \end{theorem}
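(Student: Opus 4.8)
The plan is to deduce Theorem~\ref{t.Fano} from Theorem~\ref{t.bundle} by enlarging $W$ with a very ample line bundle, so that the point-separation hypothesis of Theorem~\ref{t.bundle} is forced to hold. Since $A$ is Fano, the line bundle $L := (K^{-1}_A)^{\otimes m}$ is very ample for $m \gg 0$; in particular it is globally generated and its sections separate points of $A$. Then $W \oplus L$ is globally generated, and for $x \neq y \in A$ a section of $L$ vanishing at $x$ but not at $y$ shows that $H^0(A, (W\oplus L)\otimes {\bf m}_x) \neq H^0(A,(W\oplus L)\otimes {\bf m}_y)$ as subspaces of $H^0(A, W \oplus L)$. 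Hence $W\oplus L$ satisfies the hypotheses of Theorem~\ref{t.bundle}, and so $W \oplus L$ satisfies the formal principle. It remains to transfer the formal principle from $W\oplus L$ back to $W$.

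Write $X$ (resp.\ $X_L$) for the total space of $W$ (resp.\ $L$), so that $Y := X\times_A X_L$ is the total space of $W\oplus L$, and the zero section $0_A$ sits inside $X = X\times_A\{0\} \subset Y$. Let $i \colon X \hookrightarrow Y$ be this inclusion and $p\colon Y \to X$ the bundle projection forgetting the $L$-factor, so $p\circ i = \mathrm{id}_X$. Given the data for the formal principle for $W$, namely a compact submanifold $\tilde A \subset \tilde X$, a formal isomorphism $\psi\colon (0_A/X)_{\infty} \to (\tilde A/\tilde X)_{\infty}$, and a positive integer $\ell$, I first build the corresponding data for $W\oplus L$. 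Let $\psi_0 \colon 0_A \to \tilde A$ be the biholomorphism underlying $\psi$, set $\tilde L := (\psi_0)_* L$, and let $\tilde Y := \tilde X\times_{\tilde A}\tilde X_{\tilde L}$ with projection $\tilde p\colon\tilde Y \to \tilde X$. Since the formal completion of a fibre product along $A$ is the fibre product of the completions, $\psi$ together with the tautological isomorphism $X_L \to \tilde X_{\tilde L}$ covering $\psi_0$ assemble into a formal isomorphism $\Psi_{\infty}\colon(0_A/Y)_{\infty} \to (\tilde A/\tilde Y)_{\infty}$ satisfying $\tilde p\circ \Psi_{\infty}\circ i = \psi$.

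Now apply the formal principle for $W\oplus L$ to the triple $(\tilde A \subset \tilde Y,\ \Psi_{\infty},\ \ell)$: this yields a biholomorphism $\hat\Psi\colon(0_A/Y)_{\sO} \to (\tilde A/\tilde Y)_{\sO}$ with $\hat\Psi|_{(0_A/Y)_{\ell}} = \Psi_{\infty}|_{(0_A/Y)_{\ell}}$. I then define
\[
 \Psi := \tilde p\circ \hat\Psi\circ i \ \colon\ (0_A/X)_{\sO} \longrightarrow (\tilde A/\tilde X)_{\sO},
\]
and claim it is the biholomorphism required by Definition~\ref{d.GP}. Agreement to order $\ell$ is immediate: the closed immersion $i$ induces a morphism $(0_A/X)_{\ell} \to (0_A/Y)_{\ell}$ and the projection $\tilde p$ induces $(\tilde A/\tilde Y)_{\ell} \to (\tilde A/\tilde X)_{\ell}$, so $\Psi|_{(0_A/X)_{\ell}} = (\tilde p\circ \Psi_{\infty}\circ i)|_{(0_A/X)_{\ell}} = \psi|_{(0_A/X)_{\ell}}$.

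Finally, $\Psi$ is holomorphic near $0_A$, restricts to $\psi_0$ on $0_A$, and $0_A$ is compact, so it suffices to check that the differential of $\Psi$ along $0_A$ is invertible. This invertibility is the crux of the argument. Because $\hat\Psi$ agrees with $\Psi_{\infty}$ to first order along $0_A$ (as $\ell \geq 1$), its linearization there is block-diagonal with respect to the splitting $N_{0_A/Y} = W\oplus L$; composing with $di$, the inclusion $W \hookrightarrow W\oplus L$, and with $d\tilde p$, the projection killing the $\tilde L$-direction, therefore recovers exactly the linearization of $\psi$ along $0_A$, which is invertible precisely because $\psi$ is a formal isomorphism. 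The a priori dangerous interaction between the $W$- and $L$-directions, which could have destroyed the invertibility of $d\Psi$, is annihilated at first order by the product structure built into $\Psi_{\infty}$, and the reduction goes through; this completes the proof.
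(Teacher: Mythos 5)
Your first step (enlarging $W$ by a point-separating power of $K_A^{-1}$ so that Theorem \ref{t.bundle} applies to the sum) is the right idea and matches the paper's strategy. The gap is in the transfer step, specifically in the construction of $\tilde Y$ and of the formal isomorphism $\Psi_\infty$. You define $\tilde Y := \tilde X \times_{\tilde A} \tilde X_{\tilde L}$, but this fibre product requires a holomorphic map $\tilde X \to \tilde A$, and none exists: $\tilde X$ is an \emph{arbitrary} complex manifold containing $\tilde A$ as a compact submanifold, not a bundle over $\tilde A$, and a compact submanifold of a complex manifold admits no canonical retraction from a neighborhood. (On the $X$-side you implicitly use the bundle projection $\beta: X \to A$ to form $Y$ as the total space of $\beta^* L$; there is no analogue of $\beta$ on the $\tilde X$-side.) Even if one tries to repair this by extending the line bundle $\tilde L = (\psi_0)_* K_A^{-m}$ from $\tilde A$ to an actual neighborhood of $\tilde A$ in $\tilde X$, one must do so compatibly with $\psi$, i.e.\ realize the formal line bundle $(\psi^{-1})^*(\beta^* L|_{(0_A/X)_\infty})$ on $(\tilde A/\tilde X)_\infty$ as the formal completion of a genuine line bundle on a Euclidean neighborhood. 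That is a nontrivial formal-to-convergent extension problem which your argument does not address, and without it there is no actual pair $\tilde A \subset \tilde Y$ to which the formal principle for $W \oplus L$ can be applied.

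The paper circumvents exactly this difficulty by choosing the auxiliary line bundle to be $K_X^{-p} = (\det TX)^{\otimes p}$, i.e.\ a bundle functorially attached to the \emph{ambient} manifold rather than to $A$: then $K_{\tilde X}^{-p}$ already exists on $\tilde X$, and the given formal isomorphism $\psi$ automatically induces ${\rm d}\psi$ between the formal neighborhoods of the zero sections of $\det TX$ and $\det T\tilde X$, hence a formal isomorphism $(A/L)_\infty \to (\tilde A/\tilde L)_\infty$ extending $\psi$ with no extension problem to solve. The price is that $K_X^{-p}|_A = K_A^{-p}\otimes(\det N_{A/X})^{\otimes p}$ is no longer obviously point-separating, and the paper's Proposition \ref{p.L} restores this by twisting with a section $\eta$ of $(\det N_{A/X})^{\otimes p}$ nonvanishing at the two chosen points (using global generation of $W$). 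Your final paragraph on the invertibility of ${\rm d}\Psi$ along $0_A$ is essentially sound and parallels the paper's tangency argument for $\tilde X' = \Phi((A/X)_{\sO})$, but it only becomes relevant once the construction of $\tilde Y$ and $\Psi_\infty$ is legitimate.
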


Theorems \ref{t.main}, \ref{t.bundle} and  \ref{t.Fano} are new, even when the submanifold is  the Riemann sphere $\BP^1$.  The situation of $A = \BP^1$ in Theorem \ref{t.main} is actually the original motivation of this work. It implies the following.

\begin{theorem}\label{t.free}
Let $A \cong \BP^1 \subset X$ be a smooth rational curve whose normal bundle is globally generated
(such a rational curve is called a smooth free rational curve: see Section II.3 of \cite{Ko}). Let $\sK \subset {\rm Douady}(A)$ be a neighborhood of the point corresponding to $A$. Then there exists a nowhere-dense subset $\sS \subset \sK$ such that any member of $\sK \setminus \sS$  satisfies the formal principle. \end{theorem}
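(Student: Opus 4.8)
The plan is to derive Theorem \ref{t.free} from Theorem \ref{t.main} by checking that a free rational curve satisfies hypotheses (i)--(iv); since global generation of the normal bundle is an open condition, we may assume (shrinking $\sK$ if needed) that every member of $\sK$ is a smooth free rational curve. First I would apply Grothendieck's splitting theorem to write $N_{A/X} \cong \bigoplus_{i=1}^{r} \sO_{\BP^1}(a_i)$ with $r = \dim X - 1$, where global generation on $\BP^1$ is equivalent to $a_i \geq 0$ for every $i$. Because $H^1(\BP^1, \sO(a_i)) = 0$ whenever $a_i \geq 0$, we get $H^1(A, N_{A/X}) = 0$, so $A$ is unobstructed and $[A]$ lies in the smooth locus of $\text{Douady}(X)$. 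As smoothness of $\text{Douady}(X)$, smoothness of $\rho$ with its connected $\BP^1$-fibers, and submersiveness of $\mu$ (the last being precisely global generation of the normal bundle, as observed after Theorem \ref{t.main}) are all open conditions, hypotheses (i)--(iii) hold on a sufficiently small connected $\sK$.

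The substantive point is hypothesis (iv), and here I would split according to the degree $d := \deg N_{A/X} = \sum_i a_i$, which is locally constant on $\sK$. Suppose first that $d > 0$, so that $a_i \geq 1$ for some $i$. For each $x \in A$ one has $H^0(A, N_{A/X} \otimes {\bf m}_x) = \bigoplus_i \{ s_i \in H^0(\BP^1, \sO(a_i)) : s_i(x) = 0 \}$; every summand with $a_i = 0$ contributes only $\{0\}$, while each summand with $a_i \geq 1$ contributes the hyperplane of sections vanishing at $x$. Given $x \neq y$, choosing an index $i$ with $a_i \geq 1$ together with a section $s_i$ of $\sO(a_i)$ vanishing at $x$ but not at $y$ exhibits a vector of $H^0(A, N_{A/X} \otimes {\bf m}_x)$ that does not lie in $H^0(A, N_{A/X} \otimes {\bf m}_y)$, so the two subspaces differ and (iv) holds. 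Since $d > 0$ and global generation persist across $\sK$, every member has $\sum_i a_i = d > 0$ with all $a_i \geq 0$, hence a positive summand, so (iv) holds throughout $\sK$ and Theorem \ref{t.main} supplies the desired nowhere-dense $\sS$.

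It remains to treat $d = 0$. In that case global generation forces $N_{A/X} \cong \sO_{\BP^1}^{\oplus r}$, and the same holds for every member of $\sK$, since a degree-zero globally generated bundle on $\BP^1$ is trivial. Now hypothesis (iv) fails, so Theorem \ref{t.main} is inapplicable; instead I would invoke the theorem of Hirschowitz recorded as Theorem 2.2 of \cite{Ks86}, which establishes the formal principle for \emph{every} compact submanifold with trivial normal bundle. Thus each member of $\sK$ satisfies the formal principle and we may take $\sS = \emptyset$. Combining the two cases proves the theorem. The \emph{only} genuine obstacle is this trivial normal bundle case, in which the separation hypothesis of Theorem \ref{t.main} is structurally absent; the dichotomy between $d > 0$ and $d = 0$ is exactly the boundary between the new equivalence-method argument and the classical trivial case already settled in \cite{Ks86}.
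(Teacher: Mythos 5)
Your proposal is correct and follows essentially the same route as the paper: Grothendieck splitting of $N_{A/X}$, a dichotomy on whether all summands vanish, verification of condition (iv) of Theorem \ref{t.main} via a positive summand $\sO(a_i)$ with $a_i\geq 1$, and a separate treatment of the trivial normal bundle case. The only (immaterial) difference is in that last case, where you invoke Theorem 2.2 of \cite{Ks86} while the paper observes directly that $\mu$ is then a biholomorphism over a neighborhood of $A$, so the germ is the standard one; both amount to the same classical fact.
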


When combined with the Cartan-Fubini type extension theorem in \cite{HM01}, it gives  the following.

\begin{theorem}\label{t.CF}
Let $X, \tilde{X}$ be Fano manifolds of Picard number 1. Let $\sK$ (resp. $ \tilde{\sK}$) be an irreducible component of the space (e.g. ${\rm RatCurves}(X)$ in \cite{Ko}) of rational curves on $X$ (resp.  $\tilde{X}$)  such that the subscheme
$\sK_x \subset \sK$ (resp. $\tilde{\sK}_{\tilde{x}}$) consisting of members through a general point
$x \in X$ (resp. $\tilde{x} \in \tilde{X}$) is nonempty, projective and irreducible.   Then there exists a nowhere-dense subset $\sS \subset \sK$ such that
for any member  $A \subset X$ of $\sK \setminus \sS$, if there exists a member $\tilde{A}$ of $\tilde{\sK}$
equipped with a formal isomorphism
 $$\varphi: (\Gamma_A/ (\BP^1 \times X))_{\infty} \to (\Gamma_{\tilde{A}}/ (\BP^1 \times \tilde{X}))_{\infty}$$ where
 $\Gamma_A \subset \BP^1 \times X$ (resp. $\Gamma_{\tilde{A}} \subset \BP^1 \times \tilde{X}$) is the graph  of the normalization of $A$ (resp. $\tilde{A}$),  then $\varphi$ can be extended to a biholomorphic map from $X$ to $\tilde{X}$.
  \end{theorem}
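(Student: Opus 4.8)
The plan is to pass to graphs of normalizations inside $\BP^1\times X$, apply Theorem~\ref{t.free} there, and then invoke the Cartan--Fubini type extension theorem of \cite{HM01}. Write $p\colon \BP^1\times X\to\BP^1$ and $\pi\colon\BP^1\times X\to X$ for the two projections, and $\tilde\pi\colon\BP^1\times\tilde X\to\tilde X$ for the second projection on the other side. For $A\in\sK$ with normalization $\nu_A\colon\BP^1\to A\subset X$, let $\Gamma_A\subset\BP^1\times X$ be the graph of $\nu_A$; it is a smooth rational curve mapped isomorphically to $\BP^1$ by $p$, with $N_{\Gamma_A/(\BP^1\times X)}\cong \nu_A^*TX$. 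Since a general member of the covering family $\sK$ is free, $\nu_A^*TX$ is globally generated, so $\Gamma_A$ is a smooth free rational curve; as $A$ varies, the $\Gamma_A$ form a family of such curves (the choice of $\nu_A$ only introduces an $\Aut(\BP^1)$-ambiguity, which acts by the biholomorphisms $\sigma\times{\rm id}$ of $\BP^1\times X$ and hence does not affect the formal principle). Applying Theorem~\ref{t.free} to this family yields a nowhere-dense subset $\sS\subset\sK$ such that $\Gamma_A$ satisfies the formal principle for every $A\in\sK\setminus\sS$.

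Fix such an $A$ and suppose $\tilde A$ and $\varphi$ as in the statement exist. Regarding $\varphi$ as a formal isomorphism between the formal neighborhoods of $\Gamma_A$ and $\Gamma_{\tilde A}$, the formal principle produces, for a prescribed order $\ell$, a biholomorphism $\Phi\colon(\Gamma_A/(\BP^1\times X))_\sO\to(\Gamma_{\tilde A}/(\BP^1\times\tilde X))_\sO$ of germs of neighborhoods agreeing with $\varphi$ to order $\ell$. A key point is that every small deformation of $\Gamma_A$ in $\BP^1\times X$ is again a graph: its deformation space $H^0(\BP^1,\nu_A^*TX)$ coincides with the sections of $p$ near $\Gamma_A$, so these deformations are precisely the graphs $\Gamma_{A'}$ of the members $A'\in\sK$ close to $A$. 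As a biholomorphism of neighborhoods carries the compact submanifolds lying inside them to compact submanifolds, $\Phi$ maps each nearby graph $\Gamma_{A'}$ to a nearby graph $\Gamma_{\tilde A'}$, and hence induces a biholomorphism between a neighborhood of $[A]$ in $\sK$ and one of $[\tilde A]$ in $\tilde\sK$.

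It remains to manufacture, out of $\Phi$, a local biholomorphism between $X$ and $\tilde X$ preserving the variety of minimal rational tangents $\sC_x\subset\BP(T_xX)$, after which Cartan--Fubini will finish the argument. This descent is the main obstacle. The difficulty is that $\Phi$ is defined on $\BP^1\times X$ and does not respect the projection $\pi$: if one naively sets $\bar\Phi:=\tilde\pi\circ\Phi$ along a slice $\{t_0\}\times X$, then $d\bar\Phi$ applied to the tangent direction of a curve $A'$ through $x$ differs from the tangent direction of $\Phi(\Gamma_{A'})$ by a fixed contribution coming from the $\BP^1$-direction $d\Phi(\partial_t,0)$, which is \emph{not} tangent to the image curves, so the naive projected map fails to send $\sC_x$ into $\tilde\sC_{\bar\Phi(x)}$. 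I expect to resolve this by using the canonical isomorphism of normal bundles $\nu_A^*TX\cong\nu_{\tilde A}^*T\tilde X$ induced by $\Phi$, together with the $\Aut(\BP^1)$-action on parametrizations, to remove this horizontal contribution, thereby isolating a well-defined identification $T_xX\to T_{\bar\Phi(x)}\tilde X$ carrying the tangent directions of all curves of $\sK$ through $x$ to those of the corresponding curves of $\tilde\sK$; this is exactly the statement that $\bar\Phi$ is a local biholomorphism sending $\sC_x$ to $\tilde\sC_{\bar\Phi(x)}$.

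Finally, with a VMRT-preserving local biholomorphism $\bar\Phi$ in hand, I would invoke the Cartan--Fubini type extension theorem of \cite{HM01}: because $X$ and $\tilde X$ are Fano of Picard number $1$ and the subfamilies $\sK_x$, $\tilde\sK_{\tilde x}$ through general points are nonempty, projective and irreducible, $\bar\Phi$ extends to a biholomorphism $X\to\tilde X$, which is the required extension of $\varphi$. The hardest step is the descent of the third paragraph, and specifically the verification that the resulting map is independent of the auxiliary choices of curve, slice, and parametrization.
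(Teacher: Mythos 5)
Your first step coincides with the paper's: the graphs $\Gamma_A$ form a family of smooth free rational curves in $\BP^1\times X$ (with $N_{\Gamma_A/(\BP^1\times X)}\cong\nu_A^*TX$ globally generated for general $A$ since the family is covering), so Theorem~\ref{t.free} yields a nowhere-dense $\sS\subset\sK$ off which $\Gamma_A$ satisfies the formal principle, and the given formal isomorphism $\varphi$ is then realized by a biholomorphism $\Phi$ of germs of neighborhoods of the graphs. Where you diverge is in what you feed to Cartan--Fubini. The paper states a version of the extension theorem (Theorem~\ref{t.CFO}) whose hypothesis is \emph{exactly} a biholomorphism $(\Gamma_A/(\BP^1\times X))_{\sO}\to(\Gamma_{\tilde A}/(\BP^1\times\tilde X))_{\sO}$, and attributes its proof to Sections~3 and~4 of \cite{HM01}; with that formulation the proof of Theorem~\ref{t.CF} is finished as soon as $\Phi$ exists, and no descent to a map between open subsets of $X$ and $\tilde X$ is required. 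You instead aim at the VMRT formulation of \cite{HM01}, which forces you to descend $\Phi$ to a local biholomorphism $\bar\Phi$ of $X$ preserving the varieties of minimal rational tangents.

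That descent is where your proposal has a genuine gap, and you say so yourself. You correctly diagnose that $\tilde\pi\circ\Phi$ restricted to a slice $\{t_0\}\times X$ fails: $d\bar\Phi(\tau_{A'})$ differs from a multiple of $\tau_{\tilde A'}$ by the fixed horizontal term $d\tilde\pi(d\Phi(\partial_t,0))$, and since $d\bar\Phi$ is linear while the correction is affine, the image of the tangent cone is not the tangent cone on the other side. But the proposed remedy --- ``using the canonical isomorphism of normal bundles together with the $\Aut(\BP^1)$-action to remove this horizontal contribution'' --- is not a construction: it is not explained how it produces a holomorphic map on an open subset of $X$ (rather than a pointwise-corrected linear map), nor why the scaling factors relating $d\Phi(T\Gamma_{A'})$ to $T\Gamma_{\tilde A'}$, which a priori depend on $A'$, can be normalized coherently, nor that the result is independent of the choices of slice and parametrization. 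This is precisely the content that the paper outsources to \cite{HM01} via Theorem~\ref{t.CFO}: there the passage from the germ of an equivalence along (the graph of) one general minimal rational curve to a global biholomorphism is carried out using the projectivity and irreducibility of $\sK_x$ and the Picard number~1 hypothesis. If you want a self-contained argument you should either reproduce that derivation or cite it in the form of Theorem~\ref{t.CFO}; as written, the third paragraph is a statement of the difficulty rather than its resolution.
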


 The original statement (e.g. Theorem \ref{t.CFO} below)  of Cartan-Fubini type extension theorem in \cite{HM01} involves transcendental conditions, in terms of Euclidean neighborhoods of rational curves. Because such transcendental conditions are not easy  to check effectively, the applicability of the Cartan-Fubini type extension theorem has been  limited. To remedy this,  it is essential to replace the transcendental conditions by algebraic conditions.    Theorem \ref{t.CF} is the
first step in this direction.

  When the curves $A$ and $\tilde{A}$ are singular in Theorem \ref{t.CF}, one may wonder whether it is more natural to formulate the condition in terms of  the formal neighborhoods of $A$ and $\tilde{A}$, not
  those of $\Gamma_{A}$ and $\Gamma_{\tilde{A}}$. Such a formulation might be possible using the
   notion of the formal principle for singular subvarieties, but it would be less useful.  In the study of families of rational curves covering projective varieties,  conditions in terms of the graph of the normalization are more effectively applicable than those in terms of singular curves.

The rest of the paper consists of two sections.
In Section 2, we give a streamlined review of  Morimoto's work on Cartan's equivalence method, with some modifications needed for our purpose. The proofs of Theorems \ref{t.main} -- \ref{t.CF} are given in Section 3.

Finally, let us mention that the novelty of our argument  lies in the observation that one can use Cartan's method to obtain results like Theorem \ref{t.main} by {\em viewing families of submanifolds on a complex manifold as a geometric structure in the sense of Cartan}. This viewpoint was already used in \cite{HM01} when the submanifolds are rational curves, and  can be useful also in the study of finite-order neighborhoods of complex submanifolds.

\bigskip
{\bf Acknowledgment}
I am very grateful to Takeo Ohsawa for drawing my attention to Conjecture \ref{c.Hirs}  for $A  = \BP^1$, which was the starting point of this work. He has also provided me with  valuable historical information on the subject of the formal principle.  I would like to thank Tohru Morimoto for helpful discussions on his work \cite{Mo83} and sending me a copy of \cite{TU}.

\section{Review of Morimoto's work on Cartan's equivalence method}

Roughly speaking, a geometric structure  on a complex manifold $M$ is some holomorphic data
imposed on the jet spaces of $M$. Locally, this is equivalent to a system of partial differential equations on $M$. The equivalence problem for geometric structures asks for methods to check whether two geometric structures of the same type are locally isomorphic or not.
\'Elie Cartan (\cite{Ca}) gave an outline of a general approach to solve the equivalence problem.
Rigorous realizations of Cartan's ideas have been presented by many authors in various settings. To our knowledge,   Morimoto's \cite{Mo83}, with a summary in \cite{Mo81}, is the first systematic account of the theory in full generality. Moreover, it states some of the results in the way most convenient for us. Since \cite{Mo83} is not widely known and is rather long,  we present a streamlined  review of part of it,  with some minor complements and
 modification necessary for our purpose.

 \medskip
 \subsection{}
Let us recall the terminology of Sections 2 and 3 in \cite{Mo83}.
 For a complex manifold $M$,
  there are naturally defined complex manifolds with holomorphic submersions
  $$ M \stackrel{\pi^0}{\leftarrow} \sR^0(M) \stackrel{\pi^1_0}{\leftarrow} \sR^1(M) \leftarrow \cdots \leftarrow \sR^k(M) \stackrel{\pi^{k+1}_k}{\leftarrow} \sR^{k+1}(M) \leftarrow$$ constructed inductively as follows. Fix a vector space $V$ with $\dim V = \dim M$. The submersion $\pi^0: \sR^0(M) \to M$ is the  frame bundle of $M$ with the fiber $\sR^0_x(M)$ at $x \in M$ equal to the set ${\rm Isom}(V, T_x M)$ of linear  isomorphisms from $V$ to the tangent space
  $T_{x} M$. For each $k \geq 0$, the manifold $\sR^{k+1}(M)$ is the set of all pairs
  $(z_k, H_k)$ consisting of a point  $ z_k $  of $\sR^k(M)$ and a subspace $H_k$ of $T_{z^k} \sR^k(M)$
   satisfying inductively
  $$\dim H_k = \dim H_{k-1}, \ {\rm d} \pi^k_{k-1} (H_k) = H_{k-1} \mbox{ and } {\rm d} \pi^k (H_k) = T_{\pi^k(z_k)} M,$$ where $z_k =(z_{k-1}, H_{k-1})$ by induction and $\pi^k: \sR^k(M) \to M$ is the composition $\pi^0 \circ \pi^1_0 \circ \cdots \circ \pi^k_{k-1}$. Then $\pi^{k+1}_k: \sR^{k+1}(M) \to \sR^k(M)$ is defined by   $\pi^{k+1}_k (z_{k+1}) = z_k$ if $z_{k+1} \in \sR^{k+1}(M)$ is given by $(z_k, H_k)$.

The submersion $\pi^k: \sR^k(M) \to M$ is a $G^k(V)$-principal bundle over $M$
    for a complex Lie group $G^k(V)$ with Lie algebra $\fg^k(V),$ described in Section 1.1 of \cite{Mo83}. The map $\pi^k_{k-1}$ is equivariant with respect to the actions of $G^k(V)$ and $G^{k-1}(V)$ related by a natural surjective group homomorphism $$\varepsilon^k_{k-1}: G^k(V) \to G^{k-1}(V).$$ The corresponding Lie algebra homomorphism is  denoted by the same symbol
    $$\varepsilon^k_{k-1} : \fg^k(V) \to \fg^{k-1}(V)$$ by abuse of notation.
The manifold $\sR^k(M)$ has a natural 1-form $\Theta^{k-1}$ with values in the vector space $V^{k-1} := V + \fg^{k-1}(V)$, called the {\em fundamental form}, which generalizes the soldering form on the frame bundle $\sR^0(M)$ (see p.307 of \cite{Mo83}).
The {\em structure  function} $C^{k-2}$  is a holomorphic function on $\sR^k(M)$ with values in $V^{k-2} \otimes \wedge^2 V^*$,  whose value $C^{k-2}(z)(u,v)  \in V^{k-2}$
for $u, v \in V$ and $z \in \sR^k(M)$ is given by  (see page 311 of \cite{Mo83})
 $$C^{k-2}(z)(u,v)\ := \ \langle (\pi^k_{k-1})^* {\rm d} \Theta^{k-2}, u^{\sharp} \wedge v^{\sharp} \rangle,$$ where $u^{\sharp}, v^{\sharp} \in T_{z} \sR^k(M)$ are any vectors satisfying $$\Theta^{k-1}(u^{\sharp}) =u \mbox{ and } \Theta^{k-1}(v^{\sharp}) = v.$$

A biholomorphic map $\Phi: M \to \tilde{M}$ between two complex manifolds induces, in a canonical way, a biholomorphic map $\Phi^{(k)}: \sR^k(M) \to \sR^k(\tilde{M})$ satisfying
$\Phi^{(k)*} \tilde{\Theta}^{k-1} = \Theta^{k-1}$ where $\tilde{\Theta}^{k-1}$ denotes the fundamental form on $\sR^{(k)}(\tilde{M})$. Similarly, for $x \in M$ and $\tilde{x} \in \tilde{M}$, a formal isomorphism $\varphi: (x/M)_{\infty} \to (\tilde{x}/\tilde{M})_{\infty}$  induces a formal isomorphism $$\varphi^{(k)}: ((\pi^k)^{-1}(x)/ \sR^{k} (M))_{\infty} \to ((\tilde{\pi}^k)^{-1}(\tilde{x})/
  \sR^{k}(\tilde{M}))_{\infty}.$$

\subsection{}

We recall the terminology of Section 4 of \cite{Mo83}.
A vector bundle $\varepsilon_B: \fg \to B$ on a complex manifold $B$ equipped with holomorphically varying  Lie algebra structures on its fibers $\{ \fg(b), b \in B\},$ is called a {\em Lie $B$-algebra}.  It defines a {\em Lie $B$-group germ}, i.e., a submersion  $\varepsilon_B: G \to B$ (denoted by the same symbol $\varepsilon_B$ by abuse of notation) of complex manifolds with a distinguished section $e: B \to G$ such that the fiber $G(b)$ of $G\to B$ at $b \in B$ is the Lie group germ of the  Lie algebra $\fg(b)$ with the identity $e(b)$.   A complex Lie group (resp. Lie algebra) is regarded as a Lie $B$-group germ (resp. a Lie $B$-algebra) for an isolated point $B$.
We say that a Lie $B$-group germ $\varepsilon_B: G \to B$ is a Lie {\em $B$-subgroup germ} of a Lie $B'$-group germ $\varepsilon_{B'}: G' \to B'$, if there exits  a holomorphic map $h: B \to B'$ and an immersion $ \iota: G \to G' \times_{B'} B $ such that $\varepsilon_{B'} \circ \iota = h \circ \varepsilon_B$ and $\iota|_{G(b)} : G(b) \to G'(h(b))$ is an injective homomorphism of Lie group germs for any $b \in B$. In this case, we say that $\fg \to B$ is a Lie $B$-{\em subalgebra} of a Lie $B'$-algebra $\fg' \to B'$.

The right-action of a Lie $B$-group germ $\varepsilon_B: G \to B$ on a manifold $P$ with a submersion
$P\to B$ is defined as a holomorphic map $\alpha: U \to P$ defined on a  neighborhood $U \subset P \times_B G$ of $P \times_B e(B) \subset P \times_B G$ satisfying the usual group action property $\alpha(z, e(b)) = z$ for any $z \in P, b \in B$ and
$$ \alpha (z, g_1 \cdot g_2) = \alpha (\alpha(z, g_1), g_2),  $$ whenever both sides make sense.

Let $\pi: P \to M$ be a submersion of complex manifolds. Suppose there exist a Lie $B$-group germ $\varepsilon_B: G \to B$ and a submersion
$\pi_B: M \to B$ with a right action of $G\to B$ on $\pi_B \circ \pi:  P \to B$ given
by $\alpha: U \to P$ for a neighborhood $U \subset P \times_B G$ of $P \times_B e(B)$. Denote by $P_x$ the fiber of $\pi: P \to M$ at $ x \in M$.
We call such data $P (M, B, G)$  a {\em principal $B$-bundle germ with
the structure group germ} $\varepsilon_B: G \to B,$
if  for any point $z \in P_x$  and  $ b = \pi_B(x) \in B$,  the orbit map of the action of the group germ  $e(b) \in G(b)$
 gives a  biholomorphism $(e(b)/ G(b))_{\sO} \cong (z/P_x)_{\sO}.$
   We have the notion of a principal $B$-subbundle germ in a similar way.

Let $\varepsilon^k_B: G^k \to B$ be a Lie $B$-subgroup germ of the Lie group $G^k(V)$
 for a nonnegative integer $k$. We say that it is a {\em regular} $B$-subgroup germ of $G^k(V)$, if for each $0 \leq i \leq k$, the image $G^i = \varepsilon^k_i(G^k)$ under the composition
  $$\varepsilon^k_i := \varepsilon^{i+1}_i \circ \cdots \circ \varepsilon^k_{k-1}: G^k(V) \to G^i(V)$$ is a Lie $B$-subgroup germ of $G^i(V)$. Then we have the Lie $B$-subalgebra $\fg^i \to B$ of the Lie algebra $\fg^i(V)$.

For a regular $B$-subgroup germ $G^k \to B$ of $G^k(V),$    a principal $B$-subbundle germ
$P^k(M, B, G^k)$ of $\sR^k(M)$ is called a {\em Cartan bundle} of order $k+1$ on $M$. The regularity of $G^k \to B$ implies that the image $\pi^k_i (P^k(M, B, G^k))$ defines a Cartan bundle $P^i(M, B, G^i)$ of order $i+1$ on $M$ for each $0 \leq i \leq k$.

 For two Cartan bundles  $P^k(M, B, G)$ on $M$ and $\tilde{P}^k(\tilde{M}, \tilde{B}, \tilde{G})$ on  $\tilde{M}$, a biholomorphism (resp. formal isomorphism) $$\Phi: M \to \tilde{M} \ ( \mbox{resp. }
 \varphi: (x/M)_{\infty} \to (\tilde{x}/\tilde{M})_{\infty} )$$  is  {\em an isomorphism of the Cartan bundles} (resp. {\em a formal isomorphism of the Cartan bundles}) if $$\Phi^{(k)}: \sR^k(M) \to \sR^k(\tilde{M}) $$ $$ \big( \mbox{resp. }
 \varphi^{(k)}((\pi^k)^{-1}(x)/ \sR^{k}(M))_{\infty} \to ((\tilde{\pi}^k)^{-1}(\tilde{x})/
  \sR^{k}(\tilde{M}))_{\infty} \big) $$ sends   $P^k(M,B,G)$ to  $\tilde{P}^k(\tilde{M}, \tilde{B}, \tilde{G})$ and there exists a
biholomorphism (resp. formal isomorphism) $$h: B \to \tilde{B}
\ ( \mbox{resp. } f: (\pi_B(x)/B)_{\infty} \to (\pi_{\tilde{B}}(\tilde{x})/\tilde{B})_{\infty})$$ such that $\pi_{\tilde{B}} \circ \Phi = h \circ \pi_B$ (resp. $\pi_{\tilde{B}} \circ \varphi = f \circ \pi_B$).
 Suppose Cartan bundles $P^k:= P^k(M, B, G)$ on $M$ and   $\tilde{P}^k:= \tilde{P}^k(\tilde{M}, B, G)$ on $\tilde{M}$ have the same structural $B$-subgroup germ $ G \to B.$ Given  $z \in P^k$ and  $\tilde{z} \in \tilde{P}^k$  satisfying $$\pi_B \circ \pi^k (z) = \tilde{\pi}_B \circ \tilde{\pi}^k (\tilde{z}),$$
 we say that
 a biholomorphism $\Psi: (z/P^k)_{\sO} \to (\tilde{z}/\tilde{P}^k)_{\sO}$ is an {\em  isomorphism of Cartan bundles} if  $\Psi= \Phi^{(k)}$ for some biholomorphism $$\Phi: (x/M)_{\sO} \to (\tilde{x}/\tilde{M})_{\sO}, \ x= \pi^k(z), \tilde{x} = \tilde{\pi}^k(\tilde{z}).$$   Similarly,  a {\em  formal isomorphism of Cartan bundles}  $\psi: (z/P^k)_{\infty} \to (\tilde{z}/\tilde{P}^k)_{\infty}$ is of the form $\psi = \varphi^{(k)}$ for some formal isomorphism $\varphi: (x/M)_{\infty} \to (\tilde{x}/\tilde{M})_{\infty}$.

Restricting $\Theta^{k-1}$  to a Cartan bundle $P^k(M, B, G) \subset \sR^k(M)$, we obtain a $V^{k-1}$-valued form
$\theta^{k-1}$ and a $V^{k-2} \otimes \wedge^2V^*$-valued function $c^{k-2}$ on $P^k(M, B,G)$.
For a vector space $E$ and a manifold $Y$, let us denote by $E_Y$ the trivial vector bundle on $Y$ with the fiber $E$.
We say that a Cartan bundle $P^k= P^k(M, B, G^k)$ is {\em Morimoto-normal} if
$\theta^{k-1}$ has values in  the vector subbundle on $B$ $$ (V_B  \oplus  \fg^{k-1}) \subset (V + \fg^{k-1}(V))_B.$$ (As the word `normal' has different meaning in complex geometry, we use the term `Morimoto-normal' instead of `normal' used in \cite{Mo83}.) This is equivalent to saying that every point $z_k \in P^k$, regarded as a subspace in $T \sR^{k-1}(M)$, is tangent to $P^{k-1}(M, B, G^{k-1}) = \pi^k_{k-1}(P^k(M, B, G^k))$.
The restriction $c^{k-2}$ of   $C^{k-2}$ to $P^k$ is called the {\em first structure function } of $P^k$, which has values in $ ( V_B  \oplus \fg^{k-2} ) \otimes \wedge^2 V_B^*,$ if $P^k$ is Morimoto-normal.  The {\em second structure function} of $P^k$ is a holomorphic map $$\chi^k: P^k \to T B \otimes V_B^*$$  defined by $$\chi^k(z_k) (v) = {\rm d} \pi_B \left( \pi^k_0(z_k) (v) \right) \ \in \ T B \mbox{ for } z_k \in P^k, v \in V,$$ where $\pi^k_0(z_k) (v) \in T M$ is the image of $v$ under
 the isomorphism $ \pi^k_0(z_k): V \to T_{\pi^k(z_k)} M$.  It behaves equivariantly under the action of $G\to B$.

\subsection{}

We recall  the equivalence method for involutive Cartan bundles, from Section 8 of \cite{Mo83}.

Throughout, we fix a vector space $V$.  For a vector space $W$ and a subspace $\fh \subset \Hom( V, W)$, the first prolongation of $\fh$ is the subspace $\fh^{(1)} \subset \Hom (V, \fh)$ defined by
$$\fh^{(1)} := \{  h \in \Hom(V, \fh), \ h(u) v = h(v) u \mbox{ for all } u, v \in V\}.$$
For $v_1, \ldots, v_j \in V$, define $$\fh(v_1, \ldots, v_j) :=
\{ h \in \fh \subset \Hom(V, W), h(v_1) = \cdots = h(v_j) =0\}.$$
We say that  $\fh$ is {\em involutive} if there exists a basis $(v_1, \ldots, v_n)$ of $V$ such that $$ \dim \fh^{(1)} = \dim \fh + \sum_{i=1}^{n-1} \dim \fh(v_1, \ldots, v_i).$$
If $\fh$ is involutive, then $\fh^{(1)} \subset \Hom (V, \fh)$ is also involutive
(Proposition 8.2 in \cite{Mo83} due to Guillemin-Sternberg and Serre).

A Cartan bundle $P^0(M, B, G^0)$ of order 1 is {\em involutive} if
$$\fg^0(b) \subset \fg^0(V) = \Hom(V, V)$$ is involutive for any $b \in B$ and
the structure functions $c^{-2}$ and $\chi^{0}$ are $B$-constant, i.e.,
$$c^{-2}(z) = c^{-2}(z') \mbox{ and } \chi^{0}(z) = \chi^{0}(z')$$ for all $z, z' \in P^0 $ satisfying $ \pi_B(z) = \pi_B(z').$
For involutive Cartan bundles of order 1, we have the following result, which refines
Theorem 8.2 of \cite{Mo83} attributed to \cite{SS} and \cite{TU}.

\begin{theorem}\label{t.SS}
Let $P^0= P^0(M, B, G^0)$ and $\tilde{P}^0= \tilde{P}^0(\tilde{M},  B, G^0)$ be two involutive Cartan bundles of order 1, with the same structural $B$-group germ $G^0 \to B$.  Then for any positive integer $\ell$, any $(z,z') \in P^0 \times_B \tilde{P}^0$ and any formal isomorphism of Cartan bundles $\varphi: (z/ P^0)_{\infty} \to (\tilde{z}/\tilde{P}^0)_{\infty}$, there exists an isomorphism of Cartan bundles $\Phi: (z \in P^0)_{\sO} \to (\tilde{z} \in \tilde{P}^0)_{\sO}$   such that $$\Phi |_{(z/P^0)_{\ell}}
= \varphi|_{(z/P^0)_{\ell}}.$$ \end{theorem}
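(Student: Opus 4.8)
The plan is to recast the conclusion as an integral-manifold problem for an exterior differential system and to solve it with the holomorphic Cartan-K\"ahler theorem with estimates. First I would pass to the fiber product $Q := P^0 \times_B \tilde{P}^0$, with projections $p : Q \to P^0$ and $\tilde{p} : Q \to \tilde{P}^0$, and introduce on $Q$ the $V^{-1}$-valued $1$-form $\sigma := p^{*}\theta^{-1} - \tilde{p}^{*}\tilde{\theta}^{-1}$ formed from the restricted fundamental forms. A germ of isomorphism of Cartan bundles near $(z,\tilde{z})$ is exactly a germ of biholomorphism $\Phi$ of $P^0$ into $\tilde{P}^0$, covering a biholomorphism of the bases, with $\Phi^{*}\tilde{\theta}^{-1} = \theta^{-1}$; its graph is then an integral manifold of the Pfaffian differential ideal generated by the components of $\sigma$, of dimension $\dim P^0$, on which $p$ and $\tilde{p}$ restrict to local biholomorphisms. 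Conversely, any such transverse integral manifold through a point of $p^{-1}(z) \cap \tilde{p}^{-1}(\tilde{z})$ is the graph of a local isomorphism of Cartan bundles, because preservation of the fundamental form detects the vertical distribution and hence forces the map to cover a base map. Thus the theorem reduces to producing a transverse \emph{holomorphic} integral manifold through $(z,\tilde{z})$ agreeing to order $\ell$ with the one prescribed formally.

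Next I would establish involutivity of this system. Differentiating $\sigma$ and substituting the structure equations of the fundamental forms, the two-form $d\sigma$ is governed by the first structure function $c^{-2}$ and the second structure function $\chi^0$. Since $P^0$ and $\tilde{P}^0$ are involutive Cartan bundles of order $1$ with the common structural $B$-group germ $G^0 \to B$, both structure functions are $B$-constant, and the hypothesized formal isomorphism $\varphi$ forces their values on the two factors to coincide along the formal solution. Consequently the torsion of the system can be absorbed and its tableau is identified with the Lie $B$-algebra $\fg^0 \subset \Hom(V,V)$. By hypothesis $\fg^0(b)$ is involutive for every $b \in B$, and by Proposition 8.2 the successive prolongations $(\fg^0)^{(1)}, (\fg^0)^{(2)}, \dots$ remain involutive; this is precisely the input that lets Cartan's test pass at each stage of the prolongation tower, so the Pfaffian system is involutive and its Cartan characters are read off from a flag realizing the involutivity of $\fg^0$.

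With involutivity available, I would use the formal data as the initial datum. The formal isomorphism $\varphi$ has as its graph a transverse formal integral manifold of the system through $(z,\tilde{z})$. Applying the holomorphic Cartan-K\"ahler theorem with estimates (the Appendix of \cite{Ma72}, or Theorem IX.2.2 of \cite{BCG}) to this involutive system, I would produce a genuine holomorphic integral manifold through $(z,\tilde{z})$ whose $\ell$-jet equals that of the formal one. This convergent integral manifold is the graph of a germ of biholomorphism $\Phi : (z/P^0)_{\sO} \to (\tilde{z}/\tilde{P}^0)_{\sO}$ pulling $\tilde{\theta}^{-1}$ back to $\theta^{-1}$ and satisfying $\Phi|_{(z/P^0)_{\ell}} = \varphi|_{(z/P^0)_{\ell}}$. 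Finally, since a germ of biholomorphism preserving the fundamental form and the $B$-fibration is the canonical lift $\Phi = f^{(0)}$ of a germ of biholomorphism $f$ of the base manifolds, $\Phi$ is an isomorphism of Cartan bundles in the required sense, which completes the proof.

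I expect the crux to be twofold. The conceptual heart is the involutivity verification: one must check that the $B$-constancy and the matching of $c^{-2}$ and $\chi^0$ really do let the torsion be absorbed so that the tableau is exactly $\fg^0$, and then bootstrap through the prolongations via Proposition 8.2, keeping careful track of how the structure functions propagate so that involutivity of $\fg^0$ alone suffices at every order. The analytic heart, and the genuinely new point over the existence statement of Theorem 8.2 of \cite{Mo83}, is the use of the \emph{quantitative} Cartan-K\"ahler theorem: the classical version yields some analytic equivalence, whereas prescribing agreement with $\varphi$ to the given finite order $\ell$ requires the version with estimates, fed with the formal solution as the datum to be approximated. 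Controlling this order-$\ell$ matching, rather than mere existence, is where the real work lies.
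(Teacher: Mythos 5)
Your proposal is correct and takes essentially the same approach as the paper: the paper likewise sets up the exterior differential system on $P^0 \times_B \tilde{P}^0$ characterizing isomorphisms of the Cartan bundles and concludes with the Cartan--K\"ahler theorem with estimates to achieve the order-$\ell$ agreement with $\varphi$. The only difference is that the paper outsources the involutivity verification you sketch (torsion absorption via the $B$-constant structure functions, tableau $\fg^0$, prolongations) to Lemma 3.3 of \cite{SS} rather than carrying it out.
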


\begin{proof}
By Lemma 3.3 of \cite{SS} (or Lemma 5 of \cite{TU}), the involutiveness of $P^0$ and $\tilde{P}^0$ implies that the exterior differential system on the manifold $P^0 \times_B \tilde{P}^0$ characterizing local
biholomorphisms that are isomorphisms of Cartan bundles $P^0$ and $\tilde{P}^0$ is involutive in the sense of Cartan, hence in the sense  of Definition II.3.10 of \cite{Ma05} by Theorem 3.4 in Appendix B of \cite{Ma05}.  Thus given a formal solution of the differential system, we can find a local holomorphic solution approximating the formal solution up to any given order, by Cartan-K\"ahler theorem with estimate, e.g., Theorem 4.2 in Appendix of \cite{Ma72} (see also  Section III.3 in \cite{Ma05} or Theorem IX.2.2 of \cite{BCG}).  \end{proof}

\begin{remark}
T. Morimoto pointed out to me that one can also use Proposition 8.5 of \cite{Mo83} to lift the formal isomorphism $\varphi$ to higher order involutive Cartan bundles and apply the standard Cartan-K\"ahler theorem to deduce Theorem \ref{t.SS}. \end{remark}

Define $\fg_i(V) = {\rm Ker}(\varepsilon^i_{i-1}: \fg^i(V) \to \fg^{i-1}(V)).$ Then
there is a natural inclusion $\fg_{i}(V) \subset \Hom(V, \fg^{i-1}(V))$ (p. 302 of \cite{Mo83}).
For a regular Lie $B$-subgroup germ $G^k \to B$ of $G^k(V)$ and the associated Lie $B$-subalgebra $\fg^k \to B$ of $\fg^k(V)$,
we set $\fg^i = \varepsilon^k_i( \fg^k)$ and $\fg_i = \fg^i \cap \fg_i(V)$
with a natural inclusion $\fg_i \subset \Hom(V, \fg^{i-1}(V))$.
We write $\fg_i^{(1)} = \fg_{i+1}$ if they coincide under the inclusions
$$\fg_i^{(1)} \subset \Hom(V, \fg_i) \subset \Hom(V, \fg^i(V)) \mbox{ and }
   \fg_{i+1} \subset \Hom(V, \fg^i(V)).$$
A Cartan bundle $P^k(M, B, G^k)$ of order $k+1, k \geq 1$, is {\em involutive} if \begin{itemize}
\item[(i)] $P^k$ is Morimoto-normal;
\item[(ii)] the structure functions $c^{k-2}$ and $\chi^{k}$ are $B$-constant, i.e., $$c^{k-2}(z) = c^{k-2}(z') \mbox{ and } \chi^{k}(z) = \chi^{k}(z')$$ for all $ z, z' \in P^k $  satisfying $ \pi_B(z) = \pi_B(z').$ \item[(iii)]
    $\fg_{k-1}(b)$ is involutive for all $b \in B$; and
    \item[(iv)] $\fg_{k-1}(b)^{(1)} = \fg_k(b)$ for all $b \in B$.\end{itemize}
 Given a Cartan bundle $P^k(M, B, G^k)$ of order $k+1  \geq 2,$ if we  replace $B$ by a neighborhood of any point on $B$, we can view the projection $$P^k(M, B, G^k) \to P^{k-1}(M,B, G^{k-1})$$ as a Cartan bundle of order 1,  denoted by  $P^*(M, B, G_k^*)$, on the manifold
$P^{k-1}(M, B, G^{k-1})$ (Proposition 5.1 in \cite{Mo83}). If $P^k(M, B, G^k)$ is involutive, then $P^*(M, B, G_k^*)$ is involutive (Proposition 8.5 in  \cite{Mo83}).
Thus Theorem \ref{t.SS} implies the following refinement of Theorem 8.1 of \cite{Mo83}.

\begin{theorem}\label{t.GS}
Let $P^k= P^k(M, B, G^k)$ and $\tilde{P}^k = \tilde{P}^k(\tilde{M},  B, G^k)$ be two involutive Cartan bundles with the same structural $B$-group germ $G^k \to B$. Then for any positive integer $\ell$, any $(z,\tilde{z}) \in P^k \times_B \tilde{P}^k$ and any formal isomorphism of Cartan bundles $\varphi: (z/P^k)_{\infty} \to (\tilde{z}/\tilde{P}^k)_{\infty}$, there exists an isomorphism of Cartan bundles $\Phi: (z/P^k)_{\sO} \to (\tilde{z} /\tilde{P}^k)_{\sO}$ such that $$\Phi|_{(z/P^k)_{\ell}} = \varphi|_{(z/P^k)_{\ell}}.$$ \end{theorem}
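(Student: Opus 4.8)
The plan is to reduce the order-$(k+1)$ equivalence problem for $P^k$ and $\tilde P^k$ to an order-$1$ equivalence problem, to which Theorem \ref{t.SS} applies directly. The reduction uses the reinterpretation recalled just above: by Proposition 5.1 of \cite{Mo83}, after shrinking $B$ around the relevant point, the projection $\pi^k_{k-1}\colon P^k(M,B,G^k)\to P^{k-1}(M,B,G^{k-1})$ is itself a Cartan bundle $P^*(M,B,G_k^*)$ of order $1$ on the complex manifold $P^{k-1}$, and similarly $\tilde P^k\to\tilde P^{k-1}$ gives an order-$1$ Cartan bundle $\tilde P^*(\tilde M,B,G_k^*)$ on $\tilde P^{k-1}$. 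Since $P^k$ and $\tilde P^k$ carry the same structural $B$-group germ $G^k\to B$, the reinterpreted bundles $P^*$ and $\tilde P^*$ carry the same structural $B$-group germ $G_k^*\to B$. The essential feature of this reinterpretation is that an isomorphism of the order-$1$ Cartan bundle $P^*$ should be the same datum as an isomorphism of the order-$(k+1)$ Cartan bundle $P^k$.

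First I would record that $P^*$ and $\tilde P^*$ are involutive: this is exactly the content of Proposition 8.5 of \cite{Mo83}, given that $P^k$ and $\tilde P^k$ are involutive. Next I would translate the formal data. By definition $\varphi=\varphi_M^{(k)}$ for a formal isomorphism $\varphi_M\colon (x/M)_\infty\to(\tilde x/\tilde M)_\infty$, where $x=\pi^k(z)$ and $\tilde x=\tilde\pi^k(\tilde z)$. Its $(k-1)$-st prolongation $\varphi_M^{(k-1)}$ is a formal isomorphism of the base manifolds $P^{k-1}$ and $\tilde P^{k-1}$, and a further prolongation recovers $\varphi=\varphi_M^{(k)}$ and carries $P^*$ to $\tilde P^*$; thus $\varphi$, based at $z\in P^*=P^k$, is exactly a formal isomorphism of the order-$1$ Cartan bundles $P^*$ and $\tilde P^*$, induced by the base formal isomorphism $\varphi_M^{(k-1)}$. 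At this stage all the hypotheses of Theorem \ref{t.SS} hold for $P^*$, $\tilde P^*$, and $\varphi$, so for the prescribed $\ell$ it produces a biholomorphic isomorphism of the order-$1$ Cartan bundles $P^*$ and $\tilde P^*$ agreeing with $\varphi$ along $P^*=P^k$ to order $\ell$.

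The remaining, and I expect hardest, step is to translate this output back. A priori Theorem \ref{t.SS} yields only a biholomorphism $F$ of the base $P^{k-1}$ that is an isomorphism of the order-$1$ Cartan bundle $P^*$; what must be checked is that $F$ descends to a biholomorphism $\Phi_M\colon (x/M)_\sO\to(\tilde x/\tilde M)_\sO$ with $F=\Phi_M^{(k-1)}$, so that $\Phi_M^{(k)}=F^{(1)}|_{P^k}$ is an isomorphism of the original Cartan bundle $P^k$. This is precisely where the identification of the two equivalence problems in Proposition 5.1 of \cite{Mo83} enters: the fundamental form $\theta^{k-1}$, restricted to $P^*$, records the fibration $P^{k-1}\to M$ through its vertical distribution, so any isomorphism of $P^*$ preserves this fibration and hence descends to $M$. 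Granting this correspondence, the order-$\ell$ agreement along $P^*=P^k$ becomes $\Phi_M^{(k)}|_{(z/P^k)_\ell}=\varphi|_{(z/P^k)_\ell}$, which is the assertion of the theorem. In this way Theorem \ref{t.SS}, together with Propositions 5.1 and 8.5 of \cite{Mo83}, should give Theorem \ref{t.GS} by a single reduction to order $1$, requiring no separate induction on $k$.
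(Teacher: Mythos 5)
Your proposal is correct and follows exactly the route the paper takes: reinterpret $P^k\to P^{k-1}$ as an involutive order-$1$ Cartan bundle $P^*(M,B,G_k^*)$ via Propositions 5.1 and 8.5 of \cite{Mo83} and then apply Theorem \ref{t.SS}. The paper states this reduction in the two sentences preceding Theorem \ref{t.GS} and gives no further proof, so your additional care about transporting the formal isomorphism to $P^*$ and descending the resulting biholomorphism back to $M$ only fills in details the paper leaves implicit.
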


\subsection{}

The following is a slightly modified version of Theorem 9.1  of \cite{Mo83}.

\begin{theorem}\label{t.Morimoto}
Let $P(M, B, G) \to M \stackrel{\pi_B}{\to} B$ be a Cartan bundle of order 1. Then there exists a  nowhere-dense subset $S \subset M$ with the following properties.
\begin{itemize}
\item[(i)] $S = \cup_{i=1}^r S_i$ for some positive integer $r$ where $S_1$ is a closed analytic subset of $M$ and $S_{j+1}$ is a closed analytic subset of $M\setminus \cup_{i=1}^{j} S_i$ for any $1 \leq j < r$.
    \item[(ii)]
For each $ x \in M\setminus S$, there exists a neighborhood $U \subset M \setminus S$ of $x$ and a positive integer $k_0$ such that for each integer $k \geq k_0$, one can construct by a finite algorithm,  in a unique manner compatible with isomorphisms up to conjugation, an involutive Cartan bundle $P^k(U, B', G^k)$ of order $k+1$ associated with a regular $B'$-subgroup germ $G^k \to B'$ over a complex manifold $B'$ with submersions $\pi_{B'}: U \to B'$ and $\pi^{B'}_B: B' \to B^x$  over an open neighborhood $B^x$ of $\pi_B(x)$ in $B$, satisfying $\pi_B |_U = \pi^{B'}_B \circ \pi_{B'}$. \item[(iii)] In (ii),  the formal isomorphism type of $P(M,B,G)$ at $x \in M$ determines whether $ x$ belongs to $S$ or not, and determines the formal structure of $P^k(U, B', G^k)$ at $x$ if $x \in M \setminus S.$ \end{itemize}   \end{theorem}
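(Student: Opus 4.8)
The plan is to realise the finite algorithm of the statement as Cartan's prolongation--reduction procedure, in the rigorous form of \cite{Mo83}, starting from $P^0 := P(M,B,G)$ and alternating two operations until an involutive bundle is reached. At a typical stage we will have a Morimoto-normal Cartan bundle of order $k+1$ over a neighborhood of the chosen point, whose base is a submersive refinement of $B$, together with its first and second structure functions $c^{k-2}$ and $\chi^k$. First I would use that these functions are $G^k$-equivariant, so along a fibre of $\pi^k$ their values sweep out a single orbit in the target; the essential content is therefore captured by induced maps on the base. The two operations are then \emph{reduction}, applied when $c^{k-2}$ or $\chi^k$ fails to be constant along the base, and \emph{prolongation}, applied when they are already base-constant but the symbol is not yet involutive.

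For the reduction step I would first absorb the inessential part of $c^{k-2}$. Writing $\partial$ for the Spencer-type coboundary whose image is the indeterminacy of $c^{k-2}$ under the admissible changes of $\Theta^{k-1}$ along the fibres of $\pi^k_{k-1}$, I fix a $G^k$-equivariant linear complement to $\Im\partial$ and discard the component of $c^{k-2}$ lying in $\Im\partial$. The remaining \emph{essential} invariants, together with $\chi^k$, then descend to honest holomorphic maps on the base. On the open locus where these maps have locally constant rank, their common level sets cut out a sub-bundle of $P^k$ whose structure group germ is the stabiliser of a generic value; recording the suppressed invariants as new base coordinates produces the enlarged base $B'$ and the factorisation $\pi_{B'} = \pi^{B'}_B\circ\pi_{B'}$, with $\pi^{B'}_B\colon B' \to B^x$, demanded by the statement. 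The freedom in the choice of equivariant complement accounts for the clause ``unique up to conjugation''. When instead the structure functions are base-constant I would prolong by means of Proposition 5.1 and Proposition 8.5 of \cite{Mo83}, passing to the Morimoto-normal Cartan bundle $P^{k+1}$ of order $k+2$, whose symbol is governed by the first prolongation $\fg_{k-1}^{(1)}$.

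Termination is where I expect the main obstacle to lie, and I would handle it by a double bookkeeping. Each reduction strictly enlarges $\dim B'$, equivalently shrinks the fibre dimension of the structure group germ, so only finitely many reductions can occur; between consecutive reductions only prolongations take place, and there the Cartan--Kuranishi prolongation theorem (in the form recorded in \cite{BCG} or \cite{Ma05}) guarantees that after finitely many steps the symbol becomes involutive, i.e. $\fg_{k-1}(b)$ is involutive and $\fg_{k-1}(b)^{(1)} = \fg_k(b)$ for all $b$. This produces the integer $k_0$, and by Proposition 8.2 of \cite{Mo83} the involutivity then persists, giving an involutive $P^k(U,B',G^k)$ for every $k \ge k_0$ and establishing (ii). The delicate point I would have to verify carefully is that the locally-constant-rank hypothesis invoked at each reduction genuinely holds off a closed analytic set, since this is precisely the source of the exceptional locus.

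Finally I would assemble the exceptional set and the formal statement. Each reduction is legitimate only on the complement of the closed analytic set $S_j$ where the corresponding essential-invariant map drops rank; taking these in the order the algorithm produces them yields $S = \cup_{i=1}^r S_i$ with the nested-analyticity of (i), the integer $r$ being the total number of reductions, which is finite by the dimension count above. For (iii) I would observe that every operation in the algorithm---computing $c^{k-2}$ and $\chi^k$, choosing equivariant complements, testing ranks, and prolonging---uses only finitely many derivatives of $\Theta^{k-1}$ and purely algebraic manipulations of them. Since a formal isomorphism $\varphi$ lifts to $\varphi^{(k)}$ preserving the fundamental forms to all orders, it preserves all of this derived data; hence whether $x \in S$, and the formal structure of $P^k(U,B',G^k)$ at $x$ when $x \notin S$, depend only on the formal isomorphism type of $P(M,B,G)$ at $x$.
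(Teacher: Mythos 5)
Your proposal follows the same underlying route as the paper --- Cartan's prolongation--reduction algorithm in Morimoto's rigorous formulation --- but the paper does not actually re-prove the algorithm: for parts (i) and (ii) it simply invokes Theorem 9.1 of \cite{Mo83} verbatim, and its ``proof'' consists only of the observations needed to extract the extra clause (iii). Concretely, the paper notes that the algorithm of pages 349--351 of \cite{Mo83} uses three operations (projections by $\pi^{\ell}_0$, prolongations, and reductions --- you omit the projection step), that each operation at a point $x$ depends only on the formal structure of $P(M,B,G)$ at $x$, and that the exceptional set $S_i$ is precisely the locus where the structure functions $c^{\ell-2}$ and $\chi^{\ell}$ of the inductively constructed $P^{\ell}$ fail to be \emph{submersive over the smooth loci of their images} (the condition stated after Theorem 7.1 of \cite{Mo83}), which is a slightly different and more specific condition than your locally-constant-rank hypothesis. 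Your closing argument for (iii) --- that every step is an algebraic manipulation of finitely many derivatives of the fundamental form, hence is preserved by a formal isomorphism --- is exactly the paper's justification. The points you flag as delicate (termination of the interleaved reductions and prolongations, and the analyticity of the bad locus at each stage) are genuine subtleties, but they are exactly what the citation of Theorem 9.1 of \cite{Mo83} is meant to cover; if you intend your write-up to be self-contained you would need to supply Morimoto's Sections 6, 7 and 9 in full, whereas if you are content to cite them your proof collapses to essentially the paper's.
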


\begin{proof}
Excepting (iii), this is exactly (with slight changes of notation) Theorem 9.1 of \cite{Mo83}.
We sketch Morimoto's argument, explaining why it implies (iii).
The algorithm  in (ii), as explained in pages 349--351 of \cite{Mo83}, consists of three kinds of operations: projections by $\pi^{\ell}_0$, prolongations (Section 6 of \cite{Mo83}) and reductions (Section 7 of \cite{Mo83}).  These operations at each point $x \in M$ depend only on the formal structure of $P(M, B, G)$ at $x$.
Among them, the reduction step works only at points where the structure functions $c^{\ell-2}$ and $\chi^{\ell}$ of a Cartan bundle $P^{\ell}$ constructed inductively in  the algorithm are submersive over the smooth loci of their images. The closed analytic subset $S_i \subset M \setminus \cup_{j=1}^{i-1} S_j$ is the locus where the
structure functions appearing in the $i$-th step of the algorithm are not submersive over the smooth loci of their images (this is the condition given after Theorem 7.1 of \cite{Mo83}).  Thus the formal isomorphism type of $P(M, B, G)$ at $x$ determines whether $x$ belongs to $S$ or not, and also the formal structure of $P^k(U, B', G^k)$ at $x \in M \setminus S$.
\end{proof}

The combination of Theorem \ref{t.GS} and Theorem \ref{t.Morimoto} yields the following.

\begin{theorem}\label{t.Cartan}
Let $M, \tilde{M}$ be two complex manifolds with Cartan bundles $\pi_M: P= P(M, B, G) \to M$ and $\pi_{\tilde{M}}: \tilde{P} = \tilde{P}(\tilde{M}, B, G) \to \tilde{M}$ of the same order with  the same structural Lie $B$-group germs $G \to B$ with the associated holomorphic maps $$M \stackrel{ \pi_B}{\longrightarrow} B \stackrel{\tilde{ \pi}_B}{\longleftarrow} \tilde{M}.$$ Then there exists a nowhere-dense subset $S \subset M$ of the type described in (i) of Theorem \ref{t.Morimoto} such that \begin{itemize}
\item[(1)]  for any $z \in P$ and $\tilde{z} \in \tilde{P}$ satisfying $\pi_M(z) \not\in S$ and
$\pi_B \circ \pi_M (z) = \tilde{\pi}_{B} \circ \pi_{\tilde{M}} (\tilde{z})$ ;  \item[(2)] for any formal isomorphism of Cartan bundles $$\varphi: (z/P)_{\infty} \to (\tilde{z}/\tilde{P})_{\infty} $$
 satisfying $ \tilde{\pi}_{B} \circ \pi_{\tilde{M}} \circ \varphi = \pi_B \circ \pi_M|_{(z/P)_{\infty}};$ and \item[(3)] for any positive integer $\ell$, \end{itemize}  there exists an isomorphism of Cartan bundles
$\Phi: (z/P)_{\sO} \to (\tilde{z}/ \tilde{P})_{\sO}$ such that $\tilde{\pi}_{B} \circ \pi_{\tilde{M}} \circ \Phi = \pi_B \circ \pi_M|_{(z/P)_{\sO}}$ and $\Phi|_{(z/P)_{\ell}} = \varphi|_{(z/P)_{\ell}}$. \end{theorem}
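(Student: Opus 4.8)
The plan is to combine Theorem~\ref{t.Morimoto}, which replaces the given order-$1$ Cartan bundle by an \emph{involutive} higher-order one at a general point, with Theorem~\ref{t.GS}, which upgrades a formal isomorphism of involutive Cartan bundles to a genuine one. First I would apply Theorem~\ref{t.Morimoto} to $P(M,B,G)$ (which we take of order $1$, the case covered by that theorem) to produce the nowhere-dense subset $S \subset M$ of the form described in its part (i). Fixing $z \in P$ and $\tilde{z} \in \tilde{P}$ as in (1), with common base point $b_0 := \pi_B \circ \pi_M(z) = \tilde{\pi}_B \circ \pi_{\tilde{M}}(\tilde{z})$, and setting $x := \pi_M(z) \in M \setminus S$ and $\tilde{x} := \pi_{\tilde{M}}(\tilde{z})$, part (ii) furnishes a neighborhood $U \subset M \setminus S$ of $x$, an integer $k_0$, and for every $k \geq k_0$ an involutive Cartan bundle $P^k(U, B', G^k)$ of order $k+1$, with submersions $\pi_{B'} \colon U \to B'$ and $\pi^{B'}_B \colon B' \to B^x$ over a neighborhood $B^x$ of $b_0$, the whole construction being the output of a finite algorithm depending only on the formal structure of $P$ at $x$.

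Next I would transport this construction to $\tilde{M}$. Running the same algorithm on $\tilde{P}$ near $\tilde{x}$ produces an involutive Cartan bundle $\tilde{P}^k(\tilde{U}, \tilde{B}', G^k)$. The decisive point is that the hypothesised formal isomorphism of Cartan bundles $\varphi \colon (z/P)_\infty \to (\tilde{z}/\tilde{P})_\infty$, which by (2) is compatible with the projections to $B$, identifies the formal structure of $P$ at $x$ with that of $\tilde{P}$ at $\tilde{x}$. By Theorem~\ref{t.Morimoto}(iii) this forces $\tilde{x}$ to lie outside the corresponding nowhere-dense set of $\tilde{M}$ and identifies the formal germs of $B'$ and $\tilde{B}'$ over $b_0$; after this identification the two bundles share one and the same structural $B'$-group germ $G^k \to B'$. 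Moreover, since a formal isomorphism $\varphi$ canonically induces $\varphi^{(k)}$ on the frame bundles (as recalled earlier) and the algorithm of Theorem~\ref{t.Morimoto} is natural, i.e.\ compatible with isomorphisms up to conjugation, $\varphi^{(k)}$ restricts to a formal isomorphism of Cartan bundles
\[
\varphi^{(k)} \colon (z_k/P^k)_\infty \to (\tilde{z}_k/\tilde{P}^k)_\infty,
\]
compatible with the projections to $B'$, hence to $B$, for suitable lifts $z_k \in P^k$, $\tilde{z}_k \in \tilde{P}^k$ of $z, \tilde{z}$ lying over a common point of $B'$.

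I would then apply Theorem~\ref{t.GS} to the two involutive Cartan bundles $P^k(U, B', G^k)$ and $\tilde{P}^k(\tilde{U}, B', G^k)$, to the pair $(z_k, \tilde{z}_k)$, to the formal isomorphism $\varphi^{(k)}$, and to the given integer $\ell$ (replacing it by a larger one if necessary to absorb the order shift under projection). This yields an isomorphism of Cartan bundles, i.e.\ by definition a biholomorphism $\Phi \colon (x/U)_\sO \to (\tilde{x}/\tilde{U})_\sO$ whose lift $\Phi^{(k)}$ carries $P^k$ onto $\tilde{P}^k$ and agrees with $\varphi^{(k)}$ on $(z_k/P^k)_\ell$. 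Since $\pi^k_0(P^k) = P$ and $\pi^k_0(\tilde{P}^k) = \tilde{P}$ by regularity, projecting $\Phi^{(k)}$ by $\pi^k_0$ shows that $\Phi$ is an isomorphism of the original order-$1$ Cartan bundles $P$ and $\tilde{P}$; as projection does not raise the order of contact, the agreement descends to $\Phi|_{(z/P)_\ell} = \varphi|_{(z/P)_\ell}$, while compatibility of $\Phi^{(k)}$ with the $B'$-projections gives $\tilde{\pi}_B \circ \pi_{\tilde{M}} \circ \Phi = \pi_B \circ \pi_M$ on $(z/P)_\sO$. This produces the asserted $\Phi$.

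The main obstacle I expect is the middle step: making precise that the algorithm applied to $\tilde{P}$ returns exactly the same structural group germ $G^k \to B'$ and that $\varphi$ lifts to a formal isomorphism of the intrinsically constructed subbundles $P^k$, $\tilde{P}^k$, rather than merely of the ambient frame bundles $\sR^k(M)$, $\sR^k(\tilde{M})$. Both should follow from the content of Theorem~\ref{t.Morimoto}(iii) — that the entire output of the algorithm at a point is a formal invariant of the Cartan bundle there — combined with the functoriality of the frame-bundle tower under formal isomorphisms; but one must verify that these two mechanisms are compatible, that is, that $\varphi^{(k)}$ genuinely carries $P^k$ onto $\tilde{P}^k$. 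A secondary, purely bookkeeping, difficulty is tracking the order of contact through the passage $M \rightsquigarrow \sR^k(M)$ and back, which one resolves by invoking Theorem~\ref{t.GS} at a sufficiently high order before projecting.
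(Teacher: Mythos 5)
Your proposal is correct and follows exactly the route the paper intends: the paper derives Theorem~\ref{t.Cartan} precisely as ``the combination of Theorem~\ref{t.GS} and Theorem~\ref{t.Morimoto}'', and your use of Theorem~\ref{t.Morimoto}(iii) to transport the algorithm's output through the formal isomorphism $\varphi$ before invoking Theorem~\ref{t.GS} on the resulting involutive Cartan bundles is the very reason part (iii) was added to Morimoto's original statement. You have in fact supplied more detail (the lifting of $\varphi$ to $\varphi^{(k)}$, the bookkeeping of the order of contact) than the paper itself records.
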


    \section{Proofs of Theorems \ref{t.main} -- \ref{t.CF}}
For the proof of Theorem \ref{t.main}, it is convenient to introduce the following terminology.

    \begin{definition}\label{d.nice}
     A pair of holomorphic maps $\sK \stackrel{\rho}{\leftarrow} \sU \stackrel{\mu}{\rightarrow} \sX$ where $\sK, \sU, \sX$ are complex manifolds, is  a {\em nicely separating family} if the following properties hold.
      \begin{itemize} \item[(1)] $\rho$ is a proper submersion.
      \item[(2)] $\mu$ is a submersion  and embeds each fiber of $\rho$ into $\sX$ as a compact complex submanifold.  \item[(3)] When $p = \dim \sU - \dim \sX$, (2) implies that for each $u \in \sU$, the germ of $\rho(\mu^{-1}(\mu(u))$ at $\rho(u)$ is a $p$-dimensional submanifold   in $\sK.$ Define a holomorphic map $\rho': \sU \to {\rm Gr}(p, T \sK)$ to the Grassmannian bundle of the tangent bundle $T\sK$ by
    sending a point $u \in \sU$ to the tangent space of $\rho(\mu^{-1}(\mu(u))$ at $\rho(u)$.
Then $\rho'$ is injective. \end{itemize} \end{definition}

\begin{definition}\label{d.P0} In Definition \ref{d.nice}, consider the two vector subbundles $$T^{\rho} := {\rm Ker} ({\rm d} \rho), \ T^{\mu} := {\rm Ker}({\rm d} \mu) \ \subset T \sU.$$ They satisfy $T^{\rho} \cap T^{\mu} =0$. Fix a vector space $V$ with two subspaces $V_1, V_2 \subset V$ such that $$\dim V = \dim \sU, \ \dim V_1 = {\rm rank} T^{\rho}, \ \dim V_2 = {\rm rank} T^{\mu} \mbox{ and }  V_1 \cap V_2 =0.$$
Let  $P^0$ be the fiber subbundle of the frame bundle $\sR^0(\sU)$ whose fiber
 $P^0_y$ at $y \in \sU$ is  $$P^0_y := \{ h \in {\rm Isom}(V, T_y \sU)= \sR^0_y(\sU), \ h(V_1) = T^{\rho}_y \mbox{ and } h(V_2) = T^{\mu}_y \}.$$
We call $P^0$  the {\em canonical Cartan bundle associated with the nicely separating family} $\sK \stackrel{\rho}{\leftarrow} \sU \stackrel{\mu}{\rightarrow} \sX$.
Its structure group is the  subgroup of ${\rm GL}(V)$ preserving $V_1$ and $V_2$.
\end{definition}


    \begin{proposition}\label{p.CF}
    Let $\sK \stackrel{\rho}{\leftarrow} \sU \stackrel{\mu}{\rightarrow} \sX$ and
    $\tilde{\sK} \stackrel{\tilde{\rho}}{\leftarrow} \tilde{\sU} \stackrel{\tilde{\mu}}{\rightarrow} \tilde{\sX}$ be two nicely separating families, with the associated canonical Cartan bundles $P^0$ and $\tilde{P}^0$, respectively.  For $z \in \sU$ and $\tilde{z} \in \tilde{\sU}$,
    let $\Phi: O \to \tilde{O}$ be a biholomorphism between a neighborhood $O$ of $z$ and
    a neighborhood $\tilde{O}$ of $\tilde{z}$ which sends $z$ to $\tilde{z}$ and induces an isomorphism of the canonical Cartan bundles $P^0$ and $\tilde{P}^0$.  Then after shrinking $O$ and $\tilde{O}$ if necessary,   \begin{itemize}
    \item[(i)]
    $\Phi$ sends fibers of $\mu|_O$ to fibers of $\tilde{\mu}|_{\tilde{O}}$ descending to a biholomorphic map $\Phi^{\flat}: \mu(O) \to \tilde{\mu}(\tilde{O})$;
        \item[(ii)] $\Phi$ sends fibers of $\rho|_O$ to fibers of $\tilde{\rho}|_{\tilde{O}}$
        descending to a biholomorphic map $\Phi^{\sharp}: \rho(O) \to \tilde{\rho}(\tilde{O})$;
        \item[(iii)] there exists a biholomorphic map $F: U \to \tilde{U}$ from a neighborhood $U$ of $\rho^{-1}(\rho(z))$ to a neighborhood $\tilde{U}$ of $\tilde{\rho}^{-1}(\tilde{\rho}(\tilde{z}))$ such that the germ of $F$ at $z$ equals the germ of $\Phi$ at $z$; and \item[(iv)] the map $F$ induces a biholomorphic map $F^{\flat}$ from a neighborhood of $\mu(\rho^{-1}(\rho(z)))$ in $\sX$ to a neighborhood of $\tilde{\mu}(\tilde{\rho}^{-1}(\tilde{\rho}(\tilde{z})))$ in $\tilde{\sX}$ which agrees with $\Phi^{\flat}$ on $(\mu(z)/\sX)_{\sO}$.  \end{itemize} \end{proposition}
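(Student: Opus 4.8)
The plan is to first translate the hypothesis that $\Phi$ induces an isomorphism of the canonical Cartan bundles into a statement about the two distributions, and then to propagate the resulting local data along the compact fibre of $\rho$. Recall that the order-$0$ lift acts by $\Phi^{(0)}(h)={\rm d}_y\Phi\circ h$ on a frame $h\in\sR^0_y(\sU)$, so by the description of $P^0$ in Definition \ref{d.P0} the condition $\Phi^{(0)}(P^0)=\tilde P^0$ is equivalent to
$$ {\rm d}_y\Phi(T^{\rho}_y)=\tilde T^{\tilde\rho}_{\Phi(y)} \ \mbox{ and }\ {\rm d}_y\Phi(T^{\mu}_y)=\tilde T^{\tilde\mu}_{\Phi(y)} \quad\mbox{for all } y\in O. $$
Thus $\Phi$ carries the integrable distributions $T^{\rho}=\Ker({\rm d}\rho)$ and $T^{\mu}=\Ker({\rm d}\mu)$ to $\tilde T^{\tilde\rho}$ and $\tilde T^{\tilde\mu}$. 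Since the leaves of $T^{\mu}$ (resp. $T^{\rho}$) are the fibres of $\mu$ (resp. $\rho$), after shrinking $O$ so that these fibres are connected, $\Phi$ sends fibres of $\mu|_O$ to fibres of $\tilde\mu|_{\tilde O}$ and fibres of $\rho|_O$ to fibres of $\tilde\rho|_{\tilde O}$; as the four maps are submersions, $\Phi$ descends to biholomorphisms $\Phi^{\flat}:\mu(O)\to\tilde\mu(\tilde O)$ and $\Phi^{\sharp}:\rho(O)\to\tilde\rho(\tilde O)$, which are (i) and (ii).

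Next I would express $\Phi$ near $z$ through the injective maps $\rho',\tilde\rho'$ of Definition \ref{d.nice}(3). Write $k=\rho(z)$, $\tilde k=\tilde\rho(\tilde z)$, and let $(\Phi^{\sharp})_{*}:{\rm Gr}(p,T\sK)|_{\rho(O)}\to{\rm Gr}(p,T\tilde\sK)|_{\tilde\rho(\tilde O)}$ be the biholomorphism of Grassmannian bundles induced by $\Phi^{\sharp}$, sending $(k',P)$ to $(\Phi^{\sharp}(k'),{\rm d}\Phi^{\sharp}(P))$. Using $\Phi^{\sharp}\circ\rho=\tilde\rho\circ\Phi$ from (ii) together with (i), one checks that $\Phi^{\sharp}$ carries the germ of $\rho(\mu^{-1}(\mu(u)))$ at $\rho(u)$ onto the germ of $\tilde\rho(\tilde\mu^{-1}(\tilde\mu(\Phi(u))))$ at $\tilde\rho(\Phi(u))$, and hence matches the corresponding tangent $p$-planes. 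Comparing with the definition of $\rho'$ and $\tilde\rho'$ this says precisely
$$ (\Phi^{\sharp})_{*}\circ\rho' = \tilde\rho'\circ\Phi \quad\mbox{near } z, $$
so that, by injectivity of $\tilde\rho'$, one recovers $\Phi=(\tilde\rho')^{-1}\circ(\Phi^{\sharp})_{*}\circ\rho'$ near $z$.

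To prove (iii), I would set $\Psi:=(\Phi^{\sharp})_{*}\circ\rho'$, an embedding of the open set $\rho^{-1}(\rho(O))$ — a neighbourhood of the compact fibre $\rho^{-1}(k)$ — into ${\rm Gr}(p,T\tilde\sK)$, and define $F:=(\tilde\rho')^{-1}\circ\Psi$ wherever $\Psi$ takes values in $\Im(\tilde\rho')$. Because $F$ is given by this single global formula, it is automatically single-valued and no monodromy can arise; this is exactly where the injectivity of $\rho'$ and $\tilde\rho'$ in Definition \ref{d.nice}(3) is used. \textbf{The main obstacle is to show that $F$ is defined on a whole neighbourhood of $\rho^{-1}(k)$, i.e. that $\Psi$ maps the entire compact fibre into $\Im(\tilde\rho')$.} Here $\Im(\Psi)$ and $\Im(\tilde\rho')$ are complex submanifolds of ${\rm Gr}(p,T\tilde\sK)$ of the same dimension $\dim\sU=\dim\tilde\sU$, and by the displayed identity they have the same germ at $\Psi(z)=\tilde\rho'(\tilde z)$. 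I would consider the locus $W\subset\rho^{-1}(k)$ of points $u$ at which $\Im(\Psi)$ and $\Im(\tilde\rho')$ have the same germ at $\Psi(u)$: it is open by definition, and closed because $\Im(\tilde\rho')$ is locally closed and two equidimensional analytic submanifolds agreeing on a set accumulating at a common point share the germ there. As $z\in W$ and $\rho^{-1}(k)$ is connected, $W=\rho^{-1}(k)$, whence $\Psi(\rho^{-1}(k))\subset\Im(\tilde\rho')$. Thus $F$ is holomorphic on a neighbourhood $U$ of $\rho^{-1}(k)$, and being locally the composite of the embedding $\Psi$ with the inverse of the embedding $\tilde\rho'$ onto the common submanifold, it is biholomorphic onto a neighbourhood $\tilde U$ of $F(\rho^{-1}(k))=(\tilde\rho')^{-1}(\Psi(\rho^{-1}(k)))=\tilde\rho^{-1}(\tilde k)$; its germ at $z$ equals that of $\Phi$.

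Finally, for (iv), I would note that the condition ${\rm d}F(T^{\mu})=\tilde T^{\tilde\mu}$ is a closed analytic condition on $U$ which holds near $z$ since $F$ agrees with $\Phi$ there; by the identity theorem it holds on all of the connected $U$. Hence $F$ carries fibres of $\mu|_U$ to fibres of $\tilde\mu|_{\tilde U}$ and, after shrinking $U$ so that these fibres are connected, descends to a biholomorphism $F^{\flat}$ from a neighbourhood of $A=\mu(\rho^{-1}(k))$ in $\sX$ onto a neighbourhood of $\tilde\mu(\tilde\rho^{-1}(\tilde k))$ in $\tilde\sX$. Since $F$ coincides with $\Phi$ near $z$, the germ of $F^{\flat}$ at $\mu(z)$ agrees with that of $\Phi^{\flat}$, which gives (iv).
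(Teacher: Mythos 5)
Your proof is correct and follows essentially the same route as the paper: (i) and (ii) are read off from the fact that $P^0$ encodes the two distributions $T^{\rho}$ and $T^{\mu}$, and $F$ is obtained by transporting $\Phi$ through the injective maps $\rho',\tilde{\rho}'$ into the Grassmannian bundles via the map induced by $\Phi^{\sharp}$, with (iv) deduced from (i) and (iii). The paper compresses your open--closed propagation along the compact fibre into the single assertion that the biholomorphism between $\rho'(\sU)\cap {\rm Gr}(p,T\rho(O))$ and $\tilde{\rho}'(\tilde{\sU})\cap {\rm Gr}(p,T\tilde{\rho}(\tilde{O}))$ lifts to their normalizations; your version just makes that step (and the identity-theorem argument for (iv)) explicit.
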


        \begin{proof}
        (i) and (ii) are immediate from the definition of the Cartan bundles in Definition \ref{d.P0}.
        By Definition \ref{d.nice}, we have injective holomorphic maps
        $$ \rho': \sU \to {\rm Gr}(p, T \sK) \mbox{ and } \tilde{\rho}': \tilde{\sU} \to {\rm Gr}(p, T \tilde{\sK}),$$ which can be regarded as the normalization of the subvarieties
        $$\rho'(\sU) \subset {\rm Gr}(p, T \sK) \mbox{ and } \tilde{\rho}'(\tilde{
        \sU}) \subset {\rm Gr}(p, T \tilde{\sK}).$$
        The biholomorphic map $\Phi^{\sharp}$ induces a biholomorphic map
        $${\rm d} \Phi^{\sharp}: {\rm Gr}(p, T \rho(O)) \to {\rm Gr}(p, T \tilde{\rho} (\tilde{O})).$$ By (i) and   (ii), it induces a biholomorphic map ${\rm d} \Phi^{\sharp}|_{\rho'(O)}: \rho'(O) \cong \tilde{\rho}'(\tilde{O}).$ Since $\rho'(O)$ (resp. $\tilde{\rho}'(\tilde{O})$) is an open subset of $\rho'(\sU)$ (resp. $\tilde{\rho}'(\tilde{\sU})$), it induces a biholomorphic map
        $$ {\rm d} \Phi^{\sharp}|_{\rho'(\sU)} : \rho'(\sU)\cap {\rm Gr}(p, T \rho(O)) \cong \tilde{\rho}'(\tilde{
        \sU}) \cap {\rm Gr}(p, T \tilde{\rho}(\tilde{O})).$$ This biholomorphism between two analytic subvarieties can be lifted naturally to a biholomorphic map $F$ of their normalizations:
$$        \begin{array}{ccc}
        \rho^{-1}(\rho(O)) =: U & \stackrel{F}{\longrightarrow}  & \tilde{U}:= \tilde{\rho}^{-1}(\tilde{\rho}(\tilde{O})) \\
        \rho' \downarrow & & \downarrow \tilde{\rho}' \\
        \rho'(\sU)\cap {\rm Gr}(p, T \rho(O)) & \stackrel{{\rm d} \Phi^{\sharp}|_{\rho'(\sU)}}{\longrightarrow}  & \tilde{\rho}'(\tilde{
        \sU}) \cap {\rm Gr}(p, T \tilde{\rho}(\tilde{O})). \end{array} $$
        It is clear that the germ of $F$ at $z$ equals the germ of $\Phi$ at $z$. Finally, (iv) is immediate from (i) and (iii). \end{proof}

       For the proof of Theorem \ref{t.main}, we need two lemmata. The following is a special case of Proposition 5.9 of \cite{Hi}.

       \begin{lemma}\label{l.Hi}
       Let $A \subset X$ be a compact complex submanifold in a complex manifold. Then for each nonnegative integer $\ell$, there exists  a positive integer $\ell^+$ such that if an  automorphism of the formal neighborhood $\psi_{\infty} : (A/X)_{\infty} \to (A/X)_{\infty}$ fixes  $(x/X)_{\ell^+}$ for a point $x \in A$, then the induced automorphism         
        $\psi_{\ell} : (A/X)_{\ell} \to (A/X)_{\ell}$ of the $\ell$-th order neighborhood of $A$ in $X$ must be the identity. \end{lemma}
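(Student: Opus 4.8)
The plan is to prove that $\psi_\infty$ induces the identity on every $(A/X)_m$ for $0\le m\le \ell$ by induction on $m$, choosing $\ell^+$ large enough (depending on $A\subset X$ and $\ell$) that the hypothesis forces triviality at each step; since $\psi_\ell$ is determined by $\psi_\infty$ modulo $\sI^{\ell+1}$, reaching $m=\ell$ finishes the proof. Here $\sI$ denotes the ideal sheaf of $A$ in $X$ and $N:=N_{A/X}$, so that $\sI^m/\sI^{m+1}\cong \Sym^m N^*$. The one analytic input used repeatedly is the following consequence of compactness of $A$: for any coherent sheaf $E$ on $A$ the space $H^0(A,E)$ is finite-dimensional, so the decreasing chain of subspaces $\{s\in H^0(A,E): s \text{ vanishes to order} \ge k \text{ at } x\}$ must stabilize; since its intersection is $0$ by the identity theorem on the connected manifold $A$, there is an integer $k(E)$ such that every global section of $E$ vanishing to order $\ge k(E)$ at $x$ is identically zero.

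The base case $m=0$ is where I expect the main obstacle to lie, because the relevant object is the induced automorphism $\psi_0\in\Aut(A)$, which carries no unipotent structure; one must show that an automorphism of the compact connected manifold $A$ whose $\ell^+$-jet at $x$ is trivial must be the identity. The starting point is that $\Aut(A)$ is a finite-dimensional complex Lie group (Bochner--Montgomery) with Lie algebra $H^0(A,TA)$; the finite-dimensional stabilization above, applied to $H^0(A,TA)$, shows that the jet map $j^k_x$ is an immersion near the identity for $k$ large, so the subgroup $\Aut(A)_{[k]}$ of automorphisms with trivial $k$-jet at $x$ is discrete. The delicate remaining point is to rule out a \emph{nontrivial discrete} such group, and I would settle it by a properness argument: a sequence of automorphisms with bounded, nondegenerate $k$-jets at $x$ is precompact in $\Aut(A)$ by normal-family (Montel-type) estimates for holomorphic maps into the compact target $A$, so $\Aut(A)_{[k]}$ is compact; being discrete it is finite, and a decreasing chain of finite groups with trivial intersection terminates at the identity, giving an integer $k_0$ with $\Aut(A)_{[k_0]}=\{\mathrm{id}\}$.

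For the inductive step, fix $1\le m\le \ell$ and assume $\psi_{m-1}=\mathrm{id}$. Then $\psi_\infty$ preserves the $\sI$-adic filtration and is the identity modulo $\sI^{m}$, so its deviation from the identity is recorded by a leading term $\sigma_m$: the derivation $\sO_A\to \Sym^m N^*$ coming from the action on functions pulled back from $A$ together with the $\sO_A$-linear map $N^*\to \Sym^{m+1}N^*$ coming from the action on $\sI/\sI^2$. Thus $\sigma_m$ is a global section of a vector bundle $E_m$ on $A$ assembled from $TA$, $N$ and symmetric powers of $N^*$, and $\psi_m=\mathrm{id}$ is equivalent to $\sigma_m=0$. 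I would then check that the hypothesis that $\psi_\infty$ fixes $(x/X)_{\ell^+}$ forces $\sigma_m$ to vanish at $x$ to order at least $\ell^+-m$. Invoking the finite-dimensional principle for $E_m$, this makes $\sigma_m=0$ as soon as $\ell^+\ge k(E_m)+m$, whence $\psi_m=\mathrm{id}$ and the induction advances.

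Finally, setting $\ell^+:=\max\{\,k_0,\ k(E_1)+1,\ \dots,\ k(E_\ell)+\ell\,\}$ makes every step above valid simultaneously, and running the induction from $m=0$ up to $m=\ell$ yields $\psi_\ell=\mathrm{id}$. I expect the higher-order steps $m\ge 1$ to be essentially bookkeeping with the conormal filtration, with the only analytic content being the finite-dimensionality principle; the genuinely subtle point, and the one I would write out most carefully, is the base case—specifically the exclusion of a nontrivial discrete group of automorphisms sharing a high-order jet with the identity at the single point $x$.
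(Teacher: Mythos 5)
Your proposal is correct in substance, and it is worth noting at the outset that the paper does not prove this lemma at all: it is quoted as a special case of Proposition 5.9 of \cite{Hi}, whose content is precisely the kind of jet-determinacy statement you establish. What you have written is therefore a self-contained substitute for Hirschowitz's proposition, organized in the natural way: induct on the order $m$ of the neighborhood, identify at each stage the deviation of $\psi_m$ from the identity with a global section $\sigma_m$ of a vector bundle on $A$, and kill that section using the stabilization, forced by finite-dimensionality of $H^0$, of the filtration by vanishing order at the single point $x$. Two small remarks on the inductive step: the correct bundle is $E_m=T_X|_A\otimes\Sym^m N^*$ (the map $\psi_m^*-{\rm id}$ is a derivation $\sO_X/\sI^2\to\sI^m/\sI^{m+1}$), so the second graded piece should be $N\otimes\Sym^m N^*$ rather than $N\otimes\Sym^{m+1}N^*$ --- a harmless index slip; and both the statement and your argument require $A$ to be connected (otherwise a jet condition at one point says nothing about the other components), which is the standing convention here.

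The one place where your justification is genuinely incomplete is the compactness of ${\rm Aut}(A)_{[k]}$ in the base case, which you rightly single out as the delicate point. Montel gives precompactness only in the space of holomorphic self-maps of the compact manifold $A$, and nondegeneracy of the $k$-jet at the single point $x$ does not by itself show that a limit of automorphisms is again an automorphism (a priori the limit could fail to be injective or surjective away from $x$). The standard repair is to apply Montel simultaneously to the inverses, which also lie in ${\rm Aut}(A)_{[k]}$: after passing to a subsequence with $f_n\to f$ and $f_n^{-1}\to g$ uniformly, uniform convergence of compositions gives $f\circ g=g\circ f={\rm id}$, so $f\in{\rm Aut}(A)$; continuity of jets under uniform convergence (Cauchy estimates) puts $f$ in ${\rm Aut}(A)_{[k]}$, and Bochner--Montgomery identifies the compact-open topology with the Lie group topology, so ${\rm Aut}(A)_{[k]}$ is indeed a compact subgroup. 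With that patch, your chain of compact discrete --- hence finite --- subgroups with trivial intersection stabilizes at the identity, and the rest of the induction goes through as you describe.
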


The next lemma is proved in p. 509 of \cite{Hi} by applying
  Propositions 5.7 and 5.8 of  \cite{Hi}.

  \begin{lemma}\label{l.Hi2}
  Let $A \subset X$ and $\tilde{A} \subset \tilde{X}$ be compact complex submanifolds in complex manifolds
  with a formal isomorphism $\psi: (A/X)_{\infty} \to (\tilde{A}/\tilde{X})_{\infty}$. Let $a \in {\rm Douady}(X)$ (resp. $\tilde{a} \in {\rm Douady}(\tilde{X})$ ) be the  point corresponding to $A$ (resp. $\tilde{A}$).  Let $\sK \subset {\rm Douady}(X)$  (resp. $\tilde{\sK} \subset {\rm Douady}(\tilde{X})$) be a  neighborhood of $a$ (resp. $\tilde{a}$) and let $$\sK \stackrel{\rho}{\leftarrow} \sU \stackrel{\mu}{\to} X
\mbox{  (resp. } \tilde{\sK} \stackrel{\tilde{\rho}}{\leftarrow} \tilde{\sU} \stackrel{\tilde{\mu}}{\to} \tilde{X}) $$ be the universal family morphisms over $\sK$ (resp. $\tilde{\sK}$). Then $\psi$ induces formal isomorphisms $$\varphi: ( \rho^{-1}(a)/\sU)_{\infty} \to ( \tilde{\rho}^{-1}(\tilde{a})/\tilde{\sU})_{\infty}
\mbox{ and } \varphi^{\sharp}: (a/\sK)_{\infty} \to (\tilde{a}/\tilde{\sK})_{\infty},$$
which are compatible with the morphisms $\rho, \mu, \tilde{\rho}$ and $\tilde{\mu}$.\end{lemma}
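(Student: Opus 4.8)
\emph{Plan.} I would view both assertions as instances of the functoriality of the Douady space, combined with the principle --- established in Propositions 5.7 and 5.8 of \cite{Hi} --- that the germ of ${\rm Douady}(X)$ at $a$ and the germ of the universal family along $\rho^{-1}(a)$ depend, up to formal isomorphism, only on the formal neighborhood $(A/X)_\infty$. Recall Douady's universal property: a morphism from an analytic space $S$ to ${\rm Douady}(X)$ is the same datum as a closed subspace $\sA \subset S \times X$ that is proper and flat over $S$, and restricting to germs at $a$ forces the central fiber of $\sA$ to be $A$. The essential observation is that such a family over an Artinian base $S$, whose maximal ideal $\fm$ satisfies $\fm^{n+1} = 0$, is contained in $S \times (A/X)_n$: writing $\sI_\sA$ for the ideal of $\sA$ and using that $\sA$ has central fiber $A$, every local equation $f$ of $A$ lies in $\sI_\sA + \fm \cdot \sO_{S \times X}$, so a product of $n+1$ such equations lies in $\sI_\sA$ modulo $\fm^{n+1} = 0$, giving $\sI_A^{n+1} \subset \sI_\sA$. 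Hence the restriction of the deformation functor of $A \subset X$ to Artinian bases factors through the tower $\{(A/X)_\ell\}_\ell$ and depends only on $(A/X)_\infty$, and the same holds for the fibered family recorded by $\rho^{-1}(a) \subset \sU$.

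Next I would transport families along $\psi$. By the previous step a deformation of $A$ in $X$ over an Artinian base is a flat family of subspaces of some $(A/X)_\ell \subset (A/X)_\infty$; applying $\psi$ yields a flat family of subspaces of $(\tilde A/\tilde X)_\infty$, i.e. a deformation of $\tilde A$ in $\tilde X$, and since $\psi$ is invertible this is an isomorphism of the two formal deformation functors. As the formal completion of ${\rm Douady}(X)$ at $a$ (resp. of ${\rm Douady}(\tilde X)$ at $\tilde a$) pro-represents this functor, the isomorphism of functors is the same datum as a formal isomorphism $\varphi^\sharp : (a/\sK)_\infty \to (\tilde a/\tilde\sK)_\infty$. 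Pulling back the universal family of $\tilde\sK$ along $\varphi^\sharp$ and identifying it, through $\psi$, with the universal family of $\sK$ produces the formal isomorphism $\varphi : (\rho^{-1}(a)/\sU)_\infty \to (\tilde\rho^{-1}(\tilde a)/\tilde\sU)_\infty$ lying over $\varphi^\sharp$.

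Finally I would record the four compatibilities. Compatibility of $\varphi$ with $\rho$ and $\tilde\rho$ is precisely the statement that $\varphi$ lies over $\varphi^\sharp$, which holds by the construction of $\varphi$ from the pulled-back universal family. For $\mu$ and $\tilde\mu$, note that $\mu$ maps $\rho^{-1}(a)$ onto $A$ and so induces a formal map $(\rho^{-1}(a)/\sU)_\infty \to (A/X)_\infty$; by construction $\varphi$ is the map that $\psi$ intertwines with the analogous formal map for $\tilde\mu$, whence $\tilde\mu \circ \varphi = \psi \circ \mu$ to all orders. The step I expect to be the main obstacle is the first one: the rigorous proof, in the analytic category, that the formal deformation germ of $A$ in $X$ depends only on $(A/X)_\infty$ and that the Douady germ pro-represents it. Beyond the clean ideal-theoretic bound $\sI_A^{n+1} \subset \sI_\sA$ sketched above, this rests on the coherence and flatness inputs of Douady's theory, which I would invoke through Propositions 5.7 and 5.8 of \cite{Hi} rather than reprove.
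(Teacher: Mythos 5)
Your proposal is correct and follows essentially the same route as the paper, which proves this lemma simply by invoking Propositions 5.7 and 5.8 of \cite{Hi} (as carried out on p.~509 there); your additional sketch of why families over Artinian bases factor through the finite-order neighborhoods, and hence why the formal Douady germ is determined by $(A/X)_{\infty}$, is an accurate expansion of what those propositions provide. Nothing further is needed.
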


       We prove a slightly refined version of Theorem \ref{t.main} as follows.

        \begin{theorem}\label{t.main2}
        Let $X$ be a complex manifold and let $\sK$ be a connected open subset in  the smooth loci of ${\rm Douady}(X)$ such that the associated universal family morphisms $\sK \stackrel{\rho}{\leftarrow} \sU \stackrel{\mu}{\rightarrow} \mu(\sU) \subset  X$ is a nicely separating family. Then there exists a nowhere-dense subset $\sS = \cup_{i=1}^r \sS_i \subset \sK$        such that \begin{itemize} \item[(1)] $\sS_1 \subset \sK$ is a  closed analytic subset; \item[(2)] $\sS_{j+1}$ is a closed analytic subset in the complex manifold $\sK \setminus \cup_{i=1}^{j} \sS_i$ for each $1 \leq j < r$; and \item[(3)] the submanifold of $X$ corresponding to a point of $\sK \setminus \sS$ satisfies the formal principle.\end{itemize} \end{theorem}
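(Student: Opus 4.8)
The plan is to reduce Theorem~\ref{t.main2} to the abstract equivalence result Theorem~\ref{t.Cartan} applied to the canonical Cartan bundle $P^0$ of Definition~\ref{d.P0}, and then to use Proposition~\ref{p.CF} together with Lemmata~\ref{l.Hi} and~\ref{l.Hi2} to convert the resulting local biholomorphism into the global biholomorphism required by the formal principle. First I would fix a submanifold $A \subset X$ corresponding to a point $a \in \sK$ equipped with a formal isomorphism $\psi : (A/X)_{\infty} \to (\tilde{A}/\tilde{X})_{\infty}$ as in Definition~\ref{d.GP}, where $\tilde{A} \subset \tilde{X}$ is a compact submanifold with point $\tilde{a}$ in a Douady space. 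Applying Lemma~\ref{l.Hi2} to a neighborhood $\tilde{\sK}$ of $\tilde{a}$ and its universal family $\tilde{\sK} \stackrel{\tilde{\rho}}{\leftarrow} \tilde{\sU} \stackrel{\tilde{\mu}}{\to} \tilde{X}$, I obtain an induced formal isomorphism $\varphi : (\rho^{-1}(a)/\sU)_{\infty} \to (\tilde{\rho}^{-1}(\tilde{a})/\tilde{\sU})_{\infty}$ compatible with the four structure morphisms, together with the base formal isomorphism $\varphi^{\sharp}$. The point-separation hypothesis (iv) of Theorem~\ref{t.main} guarantees that both families are \emph{nicely separating} in the sense of Definition~\ref{d.nice}: conditions (1)--(2) follow from smoothness of $\sK$ and the submersivity of $\mu$, while the injectivity of $\rho'$ in (3) is exactly the translation of the condition $H^0(A, N_{A/X} \otimes \mathbf{m}_x) \neq H^0(A, N_{A/X} \otimes \mathbf{m}_y)$ into the geometry of tangent spaces to the subvarieties $\rho(\mu^{-1}(\mu(u)))$.

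Next I would feed the canonical Cartan bundles $P^0$ on $\sU$ and $\tilde{P}^0$ on $\tilde{\sU}$, whose structure group is the common subgroup of $\mathrm{GL}(V)$ preserving $V_1, V_2$, into Theorem~\ref{t.Cartan} with $M = \sU$, $\tilde{M} = \tilde{\sU}$ and base $B$ the common target of the maps to a point (or more precisely the trivial base, so that $B$-constancy is automatic). The formal isomorphism $\varphi$ of Lemma~\ref{l.Hi2} lifts to a formal isomorphism of the Cartan bundles $P^0, \tilde{P}^0$ because $\varphi$ respects both foliations $T^{\rho}$ and $T^{\mu}$. Theorem~\ref{t.Cartan} then produces a nowhere-dense subset $S \subset \sU$, of the stratified analytic type described in Theorem~\ref{t.Morimoto}(i), such that for any point $z \in \rho^{-1}(a)$ with $\rho(z) = a \notin \rho(S)$ and any prescribed order $\ell$, there is an isomorphism of Cartan bundles $\Phi : (z/\sU)_{\sO} \to (\tilde{z}/\tilde{\sU})_{\sO}$ agreeing with $\varphi$ to order $\ell$. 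Setting $\sS$ to be the image in $\sK$ of $S$ under $\rho$ (after verifying this image retains the stratified analytic structure (1)--(2), which holds because $\rho$ is a proper submersion with connected fibers), I obtain the desired nowhere-dense exceptional set. The abstract equivalence method supplies a local biholomorphism near $z$; the remaining work is globalization.

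The main obstacle, and the step that requires the geometric content of Proposition~\ref{p.CF}, is to pass from the \emph{local} Cartan-bundle isomorphism $\Phi$ near a single point $z \in \sU$ to a \emph{global} biholomorphism of neighborhoods of $A$ in $X$. Here I would invoke Proposition~\ref{p.CF}: since $\Phi$ is an isomorphism of the canonical Cartan bundles, it sends $\mu$-fibers to $\tilde{\mu}$-fibers and $\rho$-fibers to $\tilde{\rho}$-fibers, and the injectivity of $\rho', \tilde{\rho}'$ lets me extend $\Phi$ along the fiber $\rho^{-1}(a)$ to a biholomorphism $F : U \to \tilde{U}$ between neighborhoods of $\rho^{-1}(a)$ and $\tilde{\rho}^{-1}(\tilde{a})$. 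The key subtlety is that this analytic continuation along the compact fiber $\rho^{-1}(a) \cong A$ must be \emph{single-valued}; this is precisely why condition (iv) is imposed, as the paper's introduction notes. Proposition~\ref{p.CF}(iv) then descends $F$ to a biholomorphism $F^{\flat}$ between a neighborhood of $A = \mu(\rho^{-1}(a))$ in $X$ and a neighborhood of $\tilde{A}$ in $\tilde{X}$, agreeing with the induced map $\Phi^{\flat}$ on $(\mu(z)/X)_{\sO}$.

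Finally I would verify the order-matching required by the formal principle. The biholomorphism $\Psi := F^{\flat} : (A/X)_{\sO} \to (\tilde{A}/\tilde{X})_{\sO}$ need not agree with the original $\psi$ to order $\ell$ on the nose, because $F^{\flat}$ was built from $\Phi$ which only matches $\varphi$ to some high order at the single point $z$, whereas $\psi$ is a map of formal neighborhoods of all of $A$. This is where Lemma~\ref{l.Hi} enters: given the target order $\ell$, choose $\ell^{+}$ as in that lemma, and arrange via Theorem~\ref{t.Cartan} that $\Phi$ (hence $\Psi$) agrees with $\varphi$ (hence with $\psi$) to order $\ell^{+}$ at $x = \mu(z)$. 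Then $\psi^{-1} \circ \Psi$ is a formal automorphism of $(A/X)_{\infty}$ fixing $(x/X)_{\ell^{+}}$, so Lemma~\ref{l.Hi} forces the induced automorphism of $(A/X)_{\ell}$ to be the identity, yielding $\Psi|_{(A/X)_{\ell}} = \psi|_{(A/X)_{\ell}}$ as required by Definition~\ref{d.GP}. Assembling these pieces over all points $a \in \sK \setminus \sS$ completes the proof; the crux throughout is the interplay between the single-point equivalence guaranteed by Cartan's method and the compactness-driven globalization controlled by the separation hypothesis.
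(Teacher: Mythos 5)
Your overall architecture matches the paper's proof of Theorem~\ref{t.main2}: Lemma~\ref{l.Hi2} to induce a formal isomorphism of the universal families, Theorem~\ref{t.Cartan} applied to the canonical Cartan bundles $P^0,\tilde P^0$ to get a local biholomorphism at a point of $\sU$, Proposition~\ref{p.CF} to propagate it along the compact fiber and descend to $X$, and Lemma~\ref{l.Hi} with the auxiliary order $\ell^+$ to upgrade agreement at one point to agreement on $(A/X)_\ell$. That part is sound.

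There is, however, a genuine gap in your construction of the exceptional set. You set $\sS=\rho(S)$ and assert that this ``retains the stratified analytic structure (1)--(2)\ldots because $\rho$ is a proper submersion with connected fibers.'' Properness gives that the image is closed (and analytic, by Remmert), but it does not give nowhere-density: the fibers of $\rho$ are positive-dimensional copies of $A$, so a nowhere-dense analytic subset of $\sU$ --- e.g.\ a divisor, or a section of $\rho$ --- can easily surject onto $\sK$. With your definition, $\sS$ could be all of $\sK$ and the theorem would assert nothing. The paper avoids this by exploiting the fact that the argument only needs \emph{one} point $u\in\rho^{-1}(a)$ lying off $S$, not the whole fiber: it defines $S_i'\subset S_i$ to be the union of those irreducible components $R$ of $S_i$ that are $\rho$-saturated, i.e.\ $\rho^{-1}(\rho(R))=R$, and sets $\sS_i:=\rho(S_i')$. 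Since saturated components are unions of fibers and $\rho$ is a proper submersion, $\sS_i$ is then a closed analytic, nowhere-dense subset of the relevant stratum, and for $a\notin\sS$ one can still choose $u\in\rho^{-1}(a)\setminus S$ to run the Cartan-bundle argument (your step requiring the \emph{entire} fiber to avoid $S$ is correspondingly too strong). You should also say a word, as the paper does via Artin's theorem, on why $\tilde a$ is a smooth point of ${\rm Douady}(\tilde X)$ with $N_{\tilde A/\tilde X}\cong N_{A/X}$, so that the tilde-side family is again nicely separating and $\tilde P^0$ exists; your proposal takes this for granted.
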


        \begin{proof}
        Apply Theorem \ref{t.Morimoto} to the canonical Cartan bundle $P^0$ on $M= \sU$ associated with the nicely separating family. We obtain the subset $S =\cup_{i=1}^r S_i \subset M$ satisfying the property in Theorem \ref{t.Morimoto}. For each $S_i$,
         let $S'_i \subset S_i$ be the subset consisting of irreducible components of the closed analytic subset $S_i$ in $M \setminus \cup_{j=1}^{i-1} S_j$ defined as follows: an irreducible component $R$ of $S_i$ is in $S'_i$ if and only if $\rho^{-1}(\rho(R)) = R$. Set $\sS_i := \rho(S'_i)$.
        As $\rho$ is a proper morphism and $S'_1$ is a closed analytic subset of $\sU$, (1) is immediate. To prove (2), assume that it holds for each $ 1 \leq j \leq k$ for some $k < r$.   If $a \in \sK \setminus \cup_{i=1}^k \sS_i$ is an accumulation point of $\sS'_{k+1}$, then each point of $\rho^{-1}(a)$ is an accumulation point of $S'_{k+1}$. Choose a point $u \in \rho^{-1}(a)$ outside $\cup_{i=1}^k S_i$.
        Then $u \in S_{k+1}$ by the closedness of $S_{k+1} $ in $\sU \setminus \cup_{i=1}^k S_i$. It follows that $a \in \sS'_{k+1}$. This shows that $\sS_{k+1}$ is closed in
        $\sK \setminus \cup_{i=1}^k \sS_i$. To show that $\sS_{k+1}$ is an analytic subset in $\sK \setminus \cup_{i=1}^k \sS_i$, it suffices to check it   in a neighborhood of each point $ a \in \sS_{k+1}$ in $\sK$. As $\rho$ is a submersion, we may  show that the intersection of $\rho^{-1} (\sS_{k+1})$ with a neighborhood of some point $u \in \rho^{-1}(a)$ in $\sU$ is analytic near $u$.
       This is obvious if we  choose $u$ outside $\cup_{i=1}^k S_i$.

       To prove (3), let $a \in \sK \setminus \sS$ and let $A \subset X$ be the corresponding submanifold. Let $\tilde{A} \subset \tilde{X}$ be a submanifold of a complex manifold with a formal isomorphism $\psi: (A/X)_{\infty} \to (\tilde{A}/\tilde{X})_{\infty}$.
     Let $\varphi$ and $\varphi^{\sharp}$ be the formal isomorphisms obtained by applying Lemma \ref{l.Hi2} to our $\psi$. As the smoothness of a point of a complex space is a formal property (e.g. by Corollary 1.6 of \cite{At}), we see that $\tilde{a}$ is a smooth point of  $\tilde{\sK}$ and $\tilde{A}$ is unobstructed in $\tilde{X}$. Moreover, the normal bundle $N_{\tilde{A}/\tilde{X}}$ is isomorphic to $N_{A/X}$. Thus we can choose a neighborhood $\tilde{\sK} \subset {\rm Douady}(\tilde{X})$ of the point $\tilde{a} \in {\rm Douady}(\tilde{X})$  giving a nicely separating family $$ \tilde{\sK} \stackrel{\tilde{\rho}}{\leftarrow} \tilde{\sU} \stackrel{\tilde{\mu}}{\rightarrow} \tilde{\mu}(\tilde{\sU}) \subset \tilde{X}.$$
       We have the canonical Cartan bundles $P^0$ on $\sU$ and $\tilde{P}^0$ on $\tilde{\sU}$.
  As $\varphi$ is compatible with $\rho, \tilde{\rho},
\mu$ and $\tilde{\mu}$, its restriction to any $u \in \rho^{-1}(a)$ and $\tilde{u} = \varphi(u),$  $$\varphi_u: (u/\sU)_{\infty} \to (\tilde{u}/\tilde{\sU})_{\infty}$$   gives a formal isomorphism of the Cartan bundles $P^0$ and $\tilde{P}^0$.

       For a given positive integer $\ell$, let $\ell^+$ be as in Lemma \ref{l.Hi}.
              By our definition of $\sS$, there exists a point $u \in \rho^{-1}(a)$ which is not contained in $S$.  Applying Theorem \ref{t.Cartan},  we have a biholomorphic map  $$\Phi: (u/\sU)_{\sO}  \to (\tilde{u}/\tilde{\sU})_{\sO} \mbox{ with } \Phi|_{(u/\sU)_{\ell^+}} = \varphi|_{(u/
       \sU)_{\ell^+}}.$$
       Then Proposition \ref{p.CF} gives a biholomorphic map $$F^{\flat}:
       (A/X)_{\sO} \to (\tilde{A}/\tilde{X})_{\sO} \mbox{ with } F^{\flat}|_{(\mu(u)/X)_{\ell^+}} = \psi|_{(\mu(u)/X)_{\ell^+}}.$$  Thus
       the composition $\psi^{-1} \circ F^{\flat}|_{(A/X)_{\infty}}$ defines an automorphism of $(A/X)_{\infty}$ that fixes $(\mu(u)/ X)_{\ell^+}$. Thus by Lemma \ref{l.Hi}, the induced automorphism of $(A/X)_{\ell}$ is the identity, which means $F^{\flat}|_{(A/X)_{\ell}} = \psi|_{(A/X)_{\ell}}.$ Thus $A \subset X$ satisfies the formal principle, which proves (3).
       \end{proof}

Let us derive Theorem \ref{t.bundle} from Theorem \ref{t.main}.

\begin{proof}[Proof of Theorem \ref{t.bundle}]
 In the setting of Theorem \ref{t.bundle},  let $X$ be the underlying complex manifold of the bundle $W$ and let $0_A \subset X$ be the submanifold corresponding to the zero section of $W$. For a section $s \in H^0(A, W)$, denote by $s(A) \subset X$ the submanifold given by the values of the section $s$.

 We claim that the formal principle holds for $0_A \subset X,$ if it holds for $s(A) \subset X$ for some $s \in H^0(A, W)$.  In fact, the translation $\tau_s: X \to X$ which sends $w \in W_a$ on the fiber $W_a, a \in A$ to $\tau_s(w) = w + s(a)$ is a biholomorphic automorphism of $X$ that sends
$(0_A/X)_{\sO}$ to $(s(A)/X)_{\sO}$.

Consider the collection of submanifolds $\{s(A) \subset X, s \in H^0(A, W)\}$ and let  $\sK \subset {\rm Douady}(X)$ be the corresponding subset.  As each $s(A)$ is unobstructed, the set  $\sK$ is in the smooth loci of ${\rm Douady}(X)$. From $$\dim \sK = \dim H^0(A, W) =  \dim H^0(s(A), N_{s(A)/X}),$$  we see that $\sK$ is a connected open subset in ${\rm Douady}(X)$. Thus we are in the setting of Theorem \ref{t.main} with  the conditions (i) and (ii) satisfied. By the assumption that $W$ is globally generated, the condition (iii) of Theorem \ref{t.main} is satisfied. The condition (iv) is precisely the additional assumption of Theorem \ref{t.bundle}. Thus by Theorem \ref{t.main}, the formal principle holds for
$s(A) \subset X$ for some $s \in H^0(A, W)$, hence  for $0_A \subset X$ by the above claim.
 \end{proof}

Now we turn to Theorem \ref{t.Fano}. The following lemma is immediate from the ampleness of $K_A^{-1}$ of a Fano manifold $A$.

\begin{lemma}\label{l.Fano}
Let $A$ be a Fano manifold. Then there exists a positive integer $p$ such that
the $p$-th tensor power $K^{-p}_A := (K_A^{-1})^{\otimes p}$ satisfies
$$H^0(A, K_A^{-p} \otimes {\bf m}_x) \neq H^0(A, K_A^{-p} \otimes {\bf m}_y) \mbox{ for any two points } x \neq y \in A.$$ \end{lemma}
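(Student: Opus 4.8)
The plan is to prove Lemma \ref{l.Fano} by exploiting that ample line bundles become very ample after taking a high enough tensor power, and that very ampleness is precisely the statement that global sections separate points. First I would recall the basic fact from complex projective geometry (essentially Serre's theorem and its refinements, see e.g. Hartshorne): since $K_A^{-1}$ is ample on the compact complex manifold $A$, there exists a positive integer $p$ such that $K_A^{-p}$ is \emph{very ample}, meaning the associated complete linear system $|K_A^{-p}|$ defines an embedding $\Phi: A \hookrightarrow \BP(H^0(A, K_A^{-p})^*)$ into projective space. Indeed, by Cartan--Serre--Grothendieck finiteness and the Kodaira embedding theorem, a sufficiently high power of an ample line bundle on a compact complex manifold is very ample.

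Next I would translate very ampleness into the separation-of-points statement in the lemma. By definition, the map $\Phi$ being an embedding requires in particular that it be injective on points: for any two distinct points $x \neq y \in A$, one has $\Phi(x) \neq \Phi(y)$ in projective space. Unraveling this, $\Phi(x)$ is the hyperplane $H^0(A, K_A^{-p} \otimes {\bf m}_x) \subset H^0(A, K_A^{-p})$ consisting of sections vanishing at $x$, viewed as a point of the dual projective space. Thus $\Phi(x) = \Phi(y)$ as points of $\BP(H^0(A, K_A^{-p})^*)$ is equivalent to the equality of the two subspaces
$$H^0(A, K_A^{-p} \otimes {\bf m}_x) = H^0(A, K_A^{-p} \otimes {\bf m}_y)$$
inside $H^0(A, K_A^{-p})$. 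Therefore the injectivity of $\Phi$ on points, which holds because $\Phi$ is an embedding, yields exactly the desired inequality for all $x \neq y \in A$.

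I would then assemble these observations: choose $p$ so that $K_A^{-p}$ is very ample, let $\Phi$ be the embedding given by $|K_A^{-p}|$, and conclude from the injectivity of $\Phi$ that the sections of $K_A^{-p}$ separate points, which is precisely the claimed statement. No serious obstacle is expected here; the lemma is, as the paper says, immediate from the ampleness of $K_A^{-1}$. The only point requiring a moment of care is the standard identification of the evaluation-of-sections description of very ampleness with the subspace-equality formulation used in the lemma, together with the remark that global generation of $K_A^{-p}$ (automatic for $p$ large) guarantees that each $H^0(A, K_A^{-p}\otimes {\bf m}_x)$ is a genuine hyperplane, so that these subspaces are indeed points of the dual projective space. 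This is a routine verification rather than a genuine difficulty.
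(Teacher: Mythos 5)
Your argument is correct and is exactly the route the paper intends: the paper offers no written proof, merely remarking that the lemma is immediate from the ampleness of $K_A^{-1}$, and your expansion via the Kodaira embedding theorem (a high power of an ample line bundle is very ample, hence the associated map separates points, which is precisely the inequality of the subspaces $H^0(A, K_A^{-p}\otimes {\bf m}_x)$) is the standard way to make that remark precise. Your added care about global generation ensuring these subspaces are genuine hyperplanes is a correct and harmless refinement.
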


\begin{proposition}\label{p.L}
Let $A$ be a Fano manifold and let $p$ be a positive integer from Lemma \ref{l.Fano}. Let $A \subset X$ be an embedding in a complex manifold. Let $L$ be the underlying complex manifold of the line bundle $K^{-p}_X$ on $X$. Regard $X$ as the submanifold of $L$ defined by the zero-section of  the line bundle $K_X^{-p}$ and regard $A$ as a submanifold of $L$ via $A \subset X \subset L$. If the normal bundle $N_{A/X}$ is globally generated, then so is $N_{A/L}$ and for any $x \neq y \in A$, we have $$H^0(A, N_{A/L} \otimes {\bf m}_x) \neq H^0(A, N_{A/L} \otimes {\bf m}_y)$$ as subspaces in $H^0(A, N_{A/L})$. \end{proposition}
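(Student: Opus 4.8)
The plan is to reduce everything to a holomorphic direct-sum decomposition of $N_{A/L}$. Since $A \subset X \subset L$, there is the standard exact sequence of normal bundles
$$0 \to N_{A/X} \to N_{A/L} \to N_{X/L}|_A \to 0,$$
and because $X$ sits in $L$ as the zero section of the line bundle $K_X^{-p}$, its normal bundle is $N_{X/L} \cong K_X^{-p}$. The first thing I would do is observe that this sequence splits holomorphically: the bundle projection $\pi\colon L \to X$ retracts the zero-section embedding, so ${\rm d}\pi|_X$ splits $0 \to TX \to TL|_X \to N_{X/L} \to 0$, giving $TL|_X \cong TX \oplus K_X^{-p}$; restricting to $A$ and quotienting by $TA \subset TX|_A$ yields
$$N_{A/L} \cong N_{A/X} \oplus \left(K_X^{-p}|_A\right).$$
This splitting is exactly what circumvents the main obstacle: from the extension alone, global generation of $N_{A/L}$ would require the surjectivity of $H^0(N_{A/L}) \to H^0(N_{X/L}|_A)$, equivalently some control of $H^1(A, N_{A/X})$, which is not available.

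Next I would identify the summand $K_X^{-p}|_A$ by adjunction. From $0 \to TA \to TX|_A \to N_{A/X} \to 0$ and taking determinants, $K_X^{-1}|_A \cong K_A^{-1} \otimes \det N_{A/X}$, hence
$$K_X^{-p}|_A \cong K_A^{-p} \otimes (\det N_{A/X})^{\otimes p} =: Q.$$
I would then check that $Q$ is globally generated and separates points of $A$. Global generation of $N_{A/X}$ forces $\det N_{A/X}$, and hence $(\det N_{A/X})^{\otimes p}$, to be globally generated, while $K_A^{-p}$ separates points of $A$ by Lemma \ref{l.Fano}. A short verification shows that tensoring a point-separating line bundle by a globally generated one preserves point separation: given $x \neq y$, pick $s \in H^0(K_A^{-p})$ with $s(x)=0$ and $s(y) \neq 0$, and $t \in H^0((\det N_{A/X})^{\otimes p})$ with $t(y) \neq 0$; then $s \otimes t \in H^0(Q)$ vanishes at $x$ but not at $y$, witnessing $H^0(Q \otimes {\bf m}_x) \neq H^0(Q \otimes {\bf m}_y)$.

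Finally I would assemble the conclusion from the decomposition $N_{A/L} \cong N_{A/X} \oplus Q$. A direct sum of globally generated bundles is globally generated, so $N_{A/L}$ is globally generated. For the separation statement, the splitting gives $H^0(N_{A/L} \otimes {\bf m}_x) = H^0(N_{A/X} \otimes {\bf m}_x) \oplus H^0(Q \otimes {\bf m}_x)$ inside $H^0(N_{A/L}) = H^0(N_{A/X}) \oplus H^0(Q)$; since the $Q$-summands already differ for $x \neq y$, the section $(0, s \otimes t)$ lies in the first subspace but not the second, so $H^0(N_{A/L} \otimes {\bf m}_x) \neq H^0(N_{A/L} \otimes {\bf m}_y)$. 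The only genuinely substantive point is the holomorphic splitting of the normal-bundle sequence coming from the retraction $\pi\colon L \to X$; once that is in hand, both conclusions are formal.
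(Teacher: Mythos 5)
Your proposal is correct and follows essentially the same route as the paper: the splitting $N_{A/L}\cong N_{A/X}\oplus K_X^{-p}|_A$, the adjunction identification $K_X^{-p}|_A\cong K_A^{-p}\otimes(\det N_{A/X})^{\otimes p}$, and point separation obtained by tensoring a separating section of $K_A^{-p}$ (from Lemma \ref{l.Fano}) with a suitable nonvanishing section of $(\det N_{A/X})^{\otimes p}$. Your explicit justification of the holomorphic splitting via the bundle retraction is a detail the paper leaves implicit, but it is not a different argument.
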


\begin{proof}
We claim that
for any $x \neq y \in A$,
$$H^0(A, K^{-p}_X|_A \otimes {\bf m}_x ) \neq H^0(A, K^{-p}_X|_A \otimes {\bf m}_y).$$
As $N_{A/X}$ is globally generated, there exists a section $\eta$ of $(\det N_{A/X})^{\otimes p}$  satisfying $\eta(x) \neq 0 \neq \eta(y)$. From $K^{-p}_X|_A = K^{-p}_A \otimes (\det N_{A/X})^{\otimes p}$, we have
 \begin{eqnarray*} H^0(A, K^{-p}_A \otimes {\bf m}_x) \otimes \eta & \subset & H^0(A, K^{-p}_X|_A \otimes {\bf m}_x), \\  H^0(A, K^{-p}_A \otimes {\bf m}_y) \otimes \eta & \subset & H^0(A, K^{-p}_X|_A \otimes {\bf m}_y). \end{eqnarray*}  Since $H^0(A, K^{-p}_A \otimes {\bf m}_x) \otimes \eta \neq H^0(A, K^{-p}_A \otimes {\bf m}_y) \otimes \eta$ by Lemma \ref{l.Fano},  we obtain the claim.

We have a direct sum decomposition of vector bundles on $A$:
$$ N_{A/L} \cong N_{A/X} \oplus  K^{-p}_X|_A.$$
Thus $N_{A/L}$ is globally generated.
 Moreover, for $x \neq y \in A$,  \begin{eqnarray*} H^0(A, N_{A/L} \otimes {\bf m}_x)
&=& H^0(A, N_{A/X} \otimes {\bf m}_x) \oplus  H^0(A, K^{-p}_X|_A \otimes {\bf m}_x) \\
  H^0(A, N_{A/L} \otimes {\bf m}_y) &=& H^0(A, N_{A/X} \otimes {\bf m}_y) \oplus H^0(A, K^{-p}_X|_A \otimes {\bf m}_y).\end{eqnarray*} Thus $H^0(A, N_{A/L} \otimes {\bf m}_x) \neq H^0(A, N_{A/L} \otimes {\bf m}_y)$ follows from the above claim.
\end{proof}

\begin{proof}[Proof of Theorem \ref{t.Fano}]
Let $W$ be a globally generated vector bundle on a Fano manifold $A$.
Let $X$ be the underlying complex manifold of $W$ and regard $A$ as a submanifold of $X$ by the zero section of $W$.
Let $\beta:
X \to A$ be the natural projection of the vector bundle $W$.

We claim that the line bundle $K^{-p}_X \to X$ is isomorphic to the pull-back $\beta^* (K_A^{-p} \otimes (\det W)^{\otimes p}).$ Note that the relative tangent bundle $$T^{\beta} := {\rm Ker}({\rm d} \beta : TX \to \beta^* T A)$$ can be canonically identified with  $\beta^* W$. Thus the  exact sequence
$$0 \to T^{\beta} \to T X \to \beta^* TA \to 0$$
 gives $$K^{-1}_X = \det T^{\beta} \otimes \beta^* K^{-1}_A = \beta^* \det W \otimes \beta^* K^{-1}_A,$$  which proves the claim.

 Let $L $ be the complex manifold constructed from $A \subset X$ in Proposition \ref{p.L} and let $\lambda: L \to X$ be the natural projection.   By the above claim, we can regard the composition $L \stackrel{\lambda}{\to}  X \stackrel{\beta}{\to} A$ as the underlying complex manifold of the direct sum of vector  bundles on $A$ $$W':= W \oplus (K^{-p}_A \otimes (\det W)^{\otimes p}).$$
 Proposition \ref{p.L} implies that the vector bundle $W'$ satisfies the condition of Theorem \ref{t.bundle}. We conclude that the formal principle holds for the inclusion $A \subset L.$

To prove that $A\subset X$ satisfies the formal principle,
let $\tilde{A} \subset \tilde{X}$ be an embedding of $A$ in another complex manifold $\tilde{X}$ with a formal isomorphism $\psi: (A/X)_{\infty} \to (\tilde{A}/\tilde{X})_{\infty}$ and pick a positive integer $\ell$.   Let $\tilde{L}$ be the underlying complex manifold of the line bundle $K^{-p}_{\tilde{X}}$ on $\tilde{X}$ with the natural projection  $\tilde{\lambda}: \tilde{L} \to \tilde{X}$.
The formal isomorphism $\psi$ gives rise to a formal isomorphism of the formal neighborhoods of the zero sections in the line bundles
$${\rm d} \psi: (0_A/\det TX)_{\infty} \to (0_{\tilde{A}}/ \det T \tilde{X})_{\infty},$$ which induces a formal isomorphism $$\varphi: (A /L)_{\infty} \to (\tilde{A}/\tilde{L})_{\infty} \mbox{ satisfying }
\varphi|_{(A/X)_{\infty}} = \psi.$$ Since $A \subset L$ satisfies the formal principle,  there exists a biholomorphic map $\Phi: (A/L)_{\sO} \to (\tilde{A}/\tilde{L})_{\sO}$  such that $$\Phi|_{(A/L)_{\ell}}
= \varphi|_{(A/L)_{\ell}}.$$
Write $\tilde{X}':= \Phi ((A/L)_{\sO} \cap X) = \Phi((A/X)_{\sO})$. Then $$(\tilde{A}/\tilde{X})_{\ell} \subset \tilde{X}' \cap \tilde{X}.$$ In particular, the submanifold $\tilde{X'} \subset \tilde{L}$ is tangent to the submanifold $\tilde{X} \subset \tilde{L}$ along $\tilde{A}$. Thus the projection
$\tilde{\lambda}: \tilde{L} \to \tilde{X}$ induces a biholomorphism
 $$\tilde{\lambda}|_{\tilde{X}'}: (\tilde{A}/\tilde{X}')_{\sO} \to (\tilde{A}/\tilde{X})_{\sO}.$$
 Then the biholomorphic map $$ \Psi:=\tilde{\lambda}|_{\tilde{X}'} \circ \Phi|_{(A/X)_{\sO}} \ : \
 (A/X)_{\sO} \to (\tilde{A}/\tilde{X})_{\sO}$$ satisfies $$\Psi|_{(A/X)_{\ell}} = \Phi|_{(A/X)_{\ell}} = \varphi |_{(A/X)_{\ell}} =  \psi |_{(A/X)_{\ell}}.$$ This proves that $A \subset X$ satisfies the formal principle. \end{proof}

\begin{proof}[Proof of Theorem \ref{t.free}]
The normal bundle of the smooth rational curve $A \subset X$ is of the form
$$N_{A/X} \cong \sO(m_1) \oplus \sO(m_2) \oplus \cdots \oplus \sO(m_r)$$
for some integers $m_1 \geq m_2 \geq \cdots \geq m_r$ and $ r= \dim X -1$.
As $N_{A/X}$ is globally generated, we have $m_r \geq 0$.

Suppose that $m_1= \cdots = m_r=0$, i.e., the normal bundle is trivial.
Then the morphism $\mu$ in Theorem \ref{t.main} is biholomorphic
over a neighborhood of $A$ and the formal principle holds for $A \subset X$ trivially.

Suppose that $m_1>0$. Then for any $x\neq y \in A$, we have
$$H^0(A, \sO(m_1) \otimes {\bf m}_x) \neq H^0(A, \sO(m_1) \otimes {\bf m}_y),$$
which implies  the condition (iv) of Theorem \ref{t.main}.
Thus we can apply Theorem \ref{t.main} to finish the proof. \end{proof}

Let us recall the following  version of
Cartan-Fubini type extension theorem.

\begin{theorem}\label{t.CFO}
Let $X, \tilde{X}$ be Fano manifolds of Picard number 1. Let $\sK$ (resp. $ \tilde{\sK}$) be an irreducible component of the space of rational curves on $X$ (resp.  $\tilde{X}$)  such that the subscheme
$\sK_x \subset \sK$ (resp. $\tilde{\sK}_{\tilde{x}}$) consisting of members through a general point
$x \in X$ (resp. $\tilde{x} \in \tilde{X}$) is nonempty, projective and irreducible.  Then there exits a nowhere-dense algebraic subset $\sK' \subset \sK$ such that  for any member  $A \subset X$ belonging to  $\sK \setminus \sK'$, if there exists a member $\tilde{A}$ of $\tilde{\sK}$ equipped with a biholomorphic map
 $$\Phi: (\Gamma_A/ (\BP^1 \times X))_{\sO} \to (\Gamma_{\tilde{A}}/ (\BP^1 \times \tilde{X}))_{\sO}$$ where
 $\Gamma_A \subset \BP^1 \times X$ (resp. $\Gamma_{\tilde{A}} \subset \BP^1 \times \tilde{X}$) is the graph  of the normalization of $A$ (resp. $\tilde{A}$),  then $\Phi$ can be extended to a biholomorphic map from $X$ to $\tilde{X}$. \end{theorem}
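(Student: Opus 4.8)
The plan is to use the formal principle as a bridge that converts the \emph{formal} hypothesis of Theorem \ref{t.CF} into the \emph{transcendental} hypothesis of Theorem \ref{t.CFO}, and then invoke the latter. The point is that the graph $\Gamma_A \subset \BP^1 \times X$ of the normalization is a smooth rational curve, and I will show that for a general member $A$ of $\sK$ it satisfies the formal principle in the ambient manifold $\BP^1 \times X$. The given formal isomorphism $\varphi$ between the formal neighborhoods of the graphs can then be approximated to any prescribed finite order by a genuine biholomorphism of Euclidean neighborhoods, and this biholomorphism is exactly the input required by Theorem \ref{t.CFO}.

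First I would record the geometry of the graphs. Writing $f_A : \BP^1 \to A \subset X$ for the normalization, the graph $\Gamma_A$ is a section of the first projection, so $\Gamma_A \cong \BP^1$ and $N_{\Gamma_A/(\BP^1 \times X)} \cong f_A^* TX$. Since members of $\sK$ pass through a general point, $\sK$ is a covering family, so by Koll\'ar's theory (Section II.3 of \cite{Ko}) the general member $A$ is free, i.e.\ $f_A^* TX$ is globally generated; these free members form an open dense subset $\sK^\circ \subset \sK$. Because $X$ is Fano we have $\deg f_A^* TX = -K_X \cdot A > 0$, so $f_A^* TX$ has at least one positive summand, and exactly as in the proof of Theorem \ref{t.free} a positive summand forces
$$H^0(\BP^1, N_{\Gamma_A/(\BP^1\times X)} \otimes {\bf m}_s) \neq H^0(\BP^1, N_{\Gamma_A/(\BP^1\times X)} \otimes {\bf m}_t) \quad \text{for } s \neq t \in \Gamma_A.$$
Thus each $\Gamma_A$ with $A \in \sK^\circ$ is a smooth free rational curve in $\BP^1 \times X$ whose normal bundle separates points, so Theorem \ref{t.free} (equivalently Theorem \ref{t.main}) applies to it in $\BP^1 \times X$.

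Next I would produce the exceptional set. Since $\Gamma_A$ is unobstructed, ${\rm Douady}(\BP^1\times X)$ is smooth near $[\Gamma_A]$, and since degree-one projection to $\BP^1$ is an open condition, every nearby deformation of $\Gamma_A$ is again the graph of a map $\BP^1 \to X$. Hence the graphs fill an open subset $\sG$ of the smooth free locus of ${\rm Douady}(\BP^1\times X)$, and Theorem \ref{t.free} yields a nowhere-dense subset $\sS_{\sG}\subset \sG$ (of the stratified type of Theorem \ref{t.Morimoto}) outside which every graph satisfies the formal principle. Forgetting the parametrization gives a submersion $\sG \to \sK^\circ$ with fibers $\cong {\rm Aut}(\BP^1)$; as the formal principle for $\Gamma_A$ is invariant under reparametrization, $\sS_{\sG}$ is ${\rm Aut}(\BP^1)$-invariant and its image in $\sK^\circ$ is again nowhere dense. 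Setting $\sS := (\sK \setminus \sK^\circ) \cup (\text{image of }\sS_{\sG}) \cup \sK'$, where $\sK' \subset \sK$ is the nowhere-dense algebraic subset from Theorem \ref{t.CFO}, produces a nowhere-dense subset of $\sK$. Finally, for $A \in \sK \setminus \sS$ and a member $\tilde A$ of $\tilde\sK$ equipped with $\varphi$, the formal principle for $\Gamma_A$ gives, for a fixed $\ell$, a biholomorphism
$$\Phi : (\Gamma_A/(\BP^1\times X))_{\sO} \to (\Gamma_{\tilde A}/(\BP^1\times \tilde X))_{\sO}, \qquad \Phi|_{(\Gamma_A/(\BP^1\times X))_{\ell}} = \varphi|_{(\Gamma_A/(\BP^1\times X))_{\ell}};$$
since $A \notin \sK'$, Theorem \ref{t.CFO} extends $\Phi$ to a biholomorphism $X \to \tilde X$, which restricts to $\Phi$ along the graph and hence agrees with $\varphi$ to order $\ell$, giving the desired extension.

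I expect the main obstacle to lie in the third paragraph: applying Theorem \ref{t.free} to the graph family and descending the exceptional set to a nowhere-dense subset of $\sK$. This requires the general member to be free (so that $f_A^* TX$ is globally generated), requires nearby deformations of $\Gamma_A$ to remain graphs (so that the graphs form an open subset of ${\rm Douady}(\BP^1\times X)$), and requires the ${\rm Aut}(\BP^1)$-invariance together with the submersion property to push the stratified bad locus of Theorem \ref{t.Morimoto} down to $\sK$ without destroying nowhere-density, after which it must be reconciled with the independent set $\sK'$ from Theorem \ref{t.CFO}. By contrast, the conceptual heart of the argument, the replacement of the Euclidean-neighborhood hypothesis of Theorem \ref{t.CFO} by the formal hypothesis, is immediate once the formal principle for $\Gamma_A$ is established, since converting $\varphi$ into $\Phi$ is exactly the content of the formal principle.
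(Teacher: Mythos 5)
Your proposal proves the wrong statement. The theorem under review is Theorem \ref{t.CFO}, whose hypothesis is already a \emph{biholomorphic} map $\Phi$ between Euclidean neighborhoods of the graphs; what you have written is a proof of Theorem \ref{t.CF} (the version with a formal isomorphism $\varphi$), and that proof invokes Theorem \ref{t.CFO} itself twice --- once to obtain the exceptional set $\sK'$, which you fold into your set $\sS$, and once for the final extension step ("Theorem \ref{t.CFO} extends $\Phi$ to a biholomorphism $X \to \tilde{X}$"). As an argument for Theorem \ref{t.CFO} this is circular. The formal-principle machinery (Theorems \ref{t.main} and \ref{t.free}) can only upgrade a formal isomorphism along $\Gamma_A$ to a convergent one on a germ of neighborhoods; in Theorem \ref{t.CFO} that step is vacuous, since the convergent germ is the hypothesis. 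The actual content of the theorem --- which your argument never addresses --- is the genuinely global assertion that a germ of biholomorphism along a single general member of $\sK$, respecting nothing more than the stated conditions on $\sK$ and $\tilde{\sK}$, extends to a biholomorphism of the ambient Fano manifolds of Picard number 1.

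For comparison, the paper's proof of Theorem \ref{t.CFO} contains none of the Cartan-bundle or formal-principle apparatus: it observes that the hypothesis of Theorem \ref{t.CFO} implies the conclusion of Proposition 2.1 of \cite{HM01}, and that the desired extension is precisely the derivation of the Main Theorem of \cite{HM01} from that proposition, carried out in Sections 3 and 4 of \cite{HM01} (this is where Picard number 1 and the nonemptiness, projectivity and irreducibility of $\sK_x$ and $\tilde{\sK}_{\tilde{x}}$ are used, via analytic continuation along chains of rational curves). To repair your write-up you would need to either reproduce or cite that extension mechanism; it cannot be extracted from Theorems \ref{t.main}--\ref{t.free}. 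I note that your material on free curves, the separation property of $H^0(\BP^1, f_A^*TX)$ forced by $-K_X \cdot A > 0$, and the conversion of $\varphi$ into $\Phi$ is essentially the paper's proof of Theorem \ref{t.CF}, where it is correct and belongs; had that been the target statement, your route would coincide with the paper's, with somewhat more detail on the Douady space of graphs and the descent of the bad locus to $\sK$.
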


\begin{proof}[Sketch of proof of Theorem \ref{t.CFO}]
The assumption of Theorem \ref{t.CFO} implies the conclusion of Proposition 2.1 of \cite{HM01},
while the conclusion of Theorem \ref{t.CFO} is the conclusion of Main Theorem in p. 564 of
\cite{HM01}. Thus the proof of Theorem \ref{t.CFO} is contained in
the derivation of Main Theorem or Theorem 1.2 of
\cite{HM01}
 from Proposition 2.1 of \cite{HM01}. This derivation is exactly Section 3 and Section 4  of \cite{HM01}.
 \end{proof}

\begin{proof}[Proof of Theorem \ref{t.CF}]  Note that general members of any family of smooth rational curves on a complex manifold $X$ whose loci contain an open subset of $X$ have globally generated normal bundles (e.g. Proof of Theorem II.3.11 in \cite{Ko}). Thus in the setting of Theorem \ref{t.CF}, we can apply Theorem  \ref{t.free} to see that there exists a biholomorphic map
  $$\Phi: (\Gamma_A/ (\BP^1 \times X))_{\sO} \to (\Gamma_{\tilde{A}}/ (\BP^1 \times \tilde{X}))_{\sO}.$$ Then we are in the setting of  Theorem \ref{t.CFO}, which gives the extension to a biholomorphic map from $X$ to $\tilde{X}$. \end{proof}

We remark that Cartan-Fubini type extension theorems, like
 Main Theorem or Theorem 1.2 of \cite{HM01}, are usually formulated in terms of varieties of minimal rational tangents. But once one reaches the setting of Theorem \ref{t.CFO}, or the conclusion of Proposition 2.1 of \cite{HM01},  varieties of
 minimal rational tangents play no more role.


\begin{thebibliography}{KSWZ}

\bibitem{ABT} Abate, M., Bracci, F. and Tovena, F.: Embeddings of submanifolds and normal bundles, Adv. Math. {\bf 220} (2009) 620--656

\bibitem{Ar} Arnol'd, V. I.: Bifurcations of invariant manifolds of differential equations and normal forms in neighborhoods of elliptic curves. Funct. Anal. and Appl. {\bf 10} (1976) 249--259

\bibitem{At} Artin, M.: On the solutions of analytic equations. Invent. Math. {\bf 5} (1968) 277-291

\bibitem{BCG} Bryant, R., Chern, S.-s., Gardner, R. B., Goldschmidt, H. and Griffiths, P.: {\em Exterior differential systems}. MSRI Publications, Springer, 1990

\bibitem{Ca} Cartan, \'E.: Les sous-groupes des groupes coninus de transformations. Ann. \'Ecole Norm. Supp. {\bf 25} (1908) 57--194

\bibitem{CG} Commichau, M. and Grauert, H.: Das formale Prinzip f\"ur kompakte komplexe Untermannigfaltigkeiten mit 1-positivem Normalenb\"undel.
    {\em Recent developments in several complex variables}, Ann. Math. Stud. {\bf 100} (1981) 101--126

\bibitem{Dou} Douady, A.: Le probl\`eme des modules pour les sous-espaces analyticques compacts d'un espace anlaytique donn\'e. Ann. Inst. Fourier {\bf 16} (1966) 1--95


\bibitem{GP}  Grauert, H., Peternell, T. and Remmert, R.: {\em Several complex variables VII}. Encycl. Math. Sci. {\bf 74},  Springer-Verlag, Berlin-Heidelberg, 1994


\bibitem{Gi66} Griffiths, P.: The extension problem in complex analysis II; Embeddings with positive normal bundle. Amer. J. Math.
{\bf 88} (1966) 366--446

\bibitem{Hi}
Hirschowitz, A.:
On the convergence of formal equivalence between embeddings.
Ann. of Math. {\bf 113} (1981) 501--514

\bibitem{HM01}  Hwang, J.-M. and  Mok, N.: Cartan-Fubini type extension of holomorphic maps for Fano manfiolds of Picard number 1. J. Math. Pures Appl. (9) {\bf 80} (2001) 563--575





\bibitem{Ko}  Koll\'ar, J.: {\em Rational curves on algebraic varieties}. Springer-Verlag, Berlin-Heidelberg-New York, 1996

\bibitem{Ks86} Kosarew, S.: On some new results on the formal principle for embeddings. {\em Proceedings of the conference on algebraic geometry (Berlin, 1985)}, 217--227, Teubner-Texte Math. {\bf  92}, Teubner, Leipzig, 1986

\bibitem{Ks88} Kosarew, S.:
Ein allgemeines Kriterium fur das formale Prinzip.  J. Reine Angew. Math. 388 (1988) 18--39

\bibitem{Ma72} Malgrange, B.: Equation de Lie II, J. Diff. Geom. {\bf 7} (1972) 117--141

\bibitem{Ma05} Malgrange, B.: {\em Syst\`emes diff\'erentiels involutifs}. Panoramas et Synth\`ese {\bf 19}. Socie\'et\'e Mathe\'ematique de France, 2005



 \bibitem{Mo81} Morimoto, T.: Sur le probleme d'\'equivalence des structures g\'eom\'etriques.  C. R. Acad. Sci. Paris Ser. I Math. {\bf 292} (1981)  63--66

     \bibitem{Mo83}
    Morimoto, T.:  Sur le probl\`eme d'\'equivalence des structures g\'eom\'etriques. Japan. J. Math. {\bf 9} (1983) 293--372

    \bibitem{SS} Singer, I. M. and Sternberg, S.:
    The infinite groups of Lie and Cartan, Part I, (The transitive
groups). J. d'Analyse Math. {\bf 15} (1965) 1--114



\bibitem{St} Steinbiss, V.:
Das formale Prinzip fur reduzierte komplexe Raume mit einer schwachen Positivitatseigenschaft.
Math. Ann. {\bf 274} (1986) 485--502

\bibitem{TU} Ueno, K.: A study on the equivalence of generalized G-structures.
M. Sc. thesis, Kyoto University, 1968

\end{thebibliography}
\end{document}